\documentclass[10pt]{article}

\usepackage{amsmath}
\usepackage{amssymb}
\usepackage{graphicx}
\usepackage{color}
\usepackage{amsthm}
\usepackage{amsfonts}
\usepackage{amscd}
\usepackage{mathrsfs}
\usepackage{bm}
\usepackage{enumerate}
\usepackage{algorithmic, algorithm}
\usepackage{leqno}

\usepackage[utf8x]{inputenc}
\usepackage{graphicx}
\usepackage{float}
\usepackage{latexsym}
\usepackage{color}

\setlength{\topmargin}{-0.5in} \textwidth=6in \textheight=9in
\setlength{\oddsidemargin}{0in} \setlength{\evensidemargin}{0in}
\setlength{\arraycolsep}{1mm}

\newcommand{\B}{\mathbf}
\newcommand{\bx}{\mathbf{x}}
\newcommand{\bn}{\mathbf{n}}
\newcommand{\by}{\mathbf{y}}

\newcommand{\bs}{\mathbf{s}}
\newcommand{\p}{\partial}

\newcommand{\hk}{R_t(\bx, \by)}
\newcommand{\rhk}{\bar{R}_t(\bx, \by)}
\newcommand{\rrhk}{\bar{\bar{R}}_t(\bx, \by)}
\newcommand{\hkpipj}{R_t({\bf p}_i, {\bf p}_j)}

\newcommand{\rhkpipj}{\bar{R}_t({\bf p}_i, {\bf p}_j)}
\newcommand{\rhkpisj}{\bar{R}_t({\bf p}_i, {\bf s}_j)}
\newcommand{\hkxpj}{R_t(\bx, {\bf p}_j)}

\newcommand{\rhkxpj}{\bar{R}_t(\bx, {\bf p}_j)}
\newcommand{\rhkxsj}{\bar{R}_t(\bx, {\bf s}_j)}
\newcommand{\bff}{{\bf f}}

\newcommand{\bfu}{{\bf u}}
\newcommand{\bfp}{{\bf p}}
\newcommand{\bfs}{{\bf s}}

\newcommand{\cM}{\mathcal{M}}
\newcommand{\diver}{\mbox{div}}

\newcommand{\M}{{\mathcal M}}

\newcommand{\bV}{\mathbf{V}}
\newcommand{\bA}{\mathbf{A}}
\newcommand{\invt}{\frac{1}{t}}

\newcommand{\mathd}{\mathrm{d}}

 \newtheorem{theorem}{\textbf{Theorem}}[section]
 \newtheorem{lemma}{\textbf{Lemma}}[section]
 \newtheorem{remark}{\textbf{Remark}}[section]

 \newtheorem{assumption}{\textbf{Assumption}}[section]
\newtheorem{definition}{\textbf{Definition}}[section]

\newcommand{\R}{\mathbb{R}}

\numberwithin{equation}{section}

\makeatother

\begin{document}

\title{Convergence of the Point Integral method for the Poisson equation with Dirichlet boundary on point cloud}


\author{
Zuoqiang Shi%
\thanks{Yau Mathematical Sciences Center, Tsinghua University, Beijing, China,
100084. \textit{Email: zqshi@mail.tsinghua.edu.cn.}%
}
\and Jian Sun %
\thanks{Yau Mathematical Sciences Center, Tsinghua University, Beijing, China,
100084. \textit{Email: jsun@math.tsinghua.edu.cn.}%
}
} 

\date{}
\maketitle
\begin{abstract}
The Poisson equation on manifolds plays an fundamental role in many applications. 
Recently, we proposed a novel numerical method called the Point Integral method (PIM)
to solve the Poisson equations on manifolds from point clouds. 
In this paper, we prove the convergence of the point integral method for solving the Poisson equation 
with the Dirichlet boundary condition. 
\end{abstract}



\section{Introduction}

In the past decades, machine learning attracts more and more attentions. In many problems of machine learning, data can be represented as 
a set of points in high dimensional Euclidean space, which is usually referred as point cloud. 
One fundamental problem in machine learning is to infer the value of a function on the whole point cloud from the value on a subset of the point cloud.
Harmonic function provides an efficient way to solve this problem. One need to find a harmonic function such that it coincides with the 
given value in the subset of the point cloud. Apparently, this harmonic function can be obtained by solving Laplace equation with Dirichlet type boundary condition.  

The partial differential equations on manifolds also arise in a wide variety of applications,  
including material science \cite{CFP97,EE08}, fluid flow \cite{GT09,JL04},
biology and biophysics \cite{BEM11,ES10,NMWI11,WD08}. In these problems, Dirichlet boundary condition is also very common. 

In 2D surfaces, people have developed many numerical methods to solve variety of PDEs,
such as 
surface finite element method \cite{DE-Acta}, level set method \cite{Bertalmio,XZ03}, grid based particle method \cite{LZ09,LZ11} 
and closest point method \cite{RM08,MR09}. These methods are difficult to solve PDEs on general point cloud in high dimensional space.

To discretize the differential operators on point cloud, several alternative numerical methods have been developed. 
Liang et al. proposed to discretize the differential operators on point cloud by local least square approximations of the manifold \cite{Liang13}.
 Later, Lai et al. proposed local mesh method to approximate the 
differential operators on point cloud \cite{Lai13}. The main idea is to approximate the manifold locally by polynomials or mesh. 
Once the local approximation is obtained, it is easy to discretize the differential operators. However, when the dimension of the 
manifold is high, the local approximation is not easy to construct.

In~\cite{LSS}, we proposed a novel numerical method, point integral method (PIM), 
to solve the Poisson equation on point cloud.
The main idea of the point integral method is to
approximate the Poisson equation by the following integral equation:
\begin{equation}
\label{eq:integral-intro}
  -\int_\M  \Delta_\M u(\by)\rhk d\mu_\by\approx \invt\int_{\M} \hk(u(\bx) - u(\by))\mathd\mu_\by-2\int_{\p\M}\rhk \frac{\p u}{\p\bn}(\by)\mathd \tau_\by,
\end{equation}
where $\bn$
is the out normal of $\M$, $\M$ is a smooth $k$-dimensional manifold embedded in $\mathbb{R}^d$ and $\p\M$ is the boundary of $\M$. 
$R_t(\bx,\by)$ and $\bar{R}_t(\bx,\by)$ are kernel functions given as follows
\begin{equation}
\label{eq:kernel}
R_t(\bx, \by) = C_tR\left(\frac{|\bx -\by|^2}{4t}\right),\quad
\bar{R}_t(\bx, \by) = C_t\bar{R}\left(\frac{|\bx -\by|^2}{4t}\right)
\end{equation}
where $C_t = \frac{1}{(4\pi t)^{k/2}}$ is the normalizing factor.
 $R\in C^2(\mathbb{R}^+) $ be a positive function which is integrable over $[0,+\infty)$,
\begin{equation*}
  \bar{R}(r)=\int_r^{+\infty}R(s)\mathd s.
\end{equation*}
$\Delta_\mathcal{M}=\diver(\nabla)$ is the Laplace-Beltrami operator on $\mathcal{M}$.  
Let $\Phi: \Omega\subset \mathbb{R}^k\rightarrow \cM\subset\mathbb{R}^d$ be a local parametrization of $\cM$ and $\theta\in \Omega$.
For any differentiable function $f:\cM\rightarrow \mathbb{R}$,
 define the gradient on the manifold
\begin{align}
  \label{eq:diff-M}
  \nabla f(\Phi(\theta))&=\sum_{i,j=1}^m g^{ij}(\theta)\frac{\p \Phi}{\p\theta_i}(\theta)\frac{\p f(\Phi(\theta))}{\p\theta_j}(\theta),
\end{align}
and for vector field $F:\M\rightarrow T_\bx\M$ on $\M$, where $T_\bx\M$ is the tangent space of $\M$ at $\bx\in \M$, the divergence is defined as
\begin{align}
\label{eq:diver}
\diver (F)&= \frac{1}{\sqrt{\det G}}\sum_{k=1}^d\sum_{i,j=1}^m\frac{\p}{\p \theta_i}\left(\sqrt{\det G}g^{ij}F^k(\Phi(\theta))\frac{\p \Phi^k}{\p\theta_j}\right)
\end{align}
where $(g^{ij})_{i,j=1,\cdots,k}=G^{-1}$, $\det G$ is the determinant of matrix $G$ and $G(\theta)=(g_{ij})_{i,j=1,\cdots,k}$ is the first fundamental form which is defined by
\begin{eqnarray}
  \label{eq:remainn}
  g_{ij}(\theta)=\sum_{k=1}^d\frac{\p \Phi_k}{\p\theta_i}(\theta)\frac{\p \Phi_k}{\p\theta_j}(\theta),\quad i,j=1,\cdots,m.
\end{eqnarray}
and $(F^1(\bx),\cdots,F^d(\bx))^t$ is the representation of $F$ in the embedding coordinates.

Using the integral approximation, the Laplace-Beltrami operator is transfered to an integral operator. The integral operator is easy to
be discretized on point clouds.
Similar idea is also used in nonlocal diffusion and peridynamic model \cite{Du-SIAM,book-nonlocal,DGLZ13,DJTZ13,ZD10}.

In this paper, we focus on the Dirichlet problem for the Poisson equation on 
a smooth, compact $k$-dimensional submanifold $\mathcal{M}$ 
in $\R^d$. 
\begin{equation} 
\left\{\begin{array}{rl}
      -\Delta_\mathcal{M} u(\bx)=f(\bx),&\bx\in \mathcal{M} \\
      u(\bx)=b(\bx),& \bx\in \p \mathcal{M}
\end{array}
\right.  
\label{eqn:dirichlet-intro} 
\end{equation}
The integral approximation does not apply on the Dirichlet problem directly, since the normal derivative is required in the integral approximation while it is 
not given in Dirichlet problem. To solve this problem, we use the Robin problem to approximate the original Dirichlet problem.
\begin{equation} 
\left\{\begin{array}{rl}
      -\Delta_\mathcal{M} u(\bx)=f(\bx),&\bx\in \mathcal{M} \\
      u(\bx)+\beta\frac{\p u}{\p \bn}=b(\bx),& \bx\in \p \mathcal{M}, 
\end{array}
\right. 
\label{eqn:robin-intro} 
\end{equation}
Above Robin problem approximates the Dirichlet problem~\eqref{eqn:dirichlet-intro} when the parameter $\beta$ is small.
At the same time, it can be approximated by following integral equation
\begin{align}
\label{eqn:integral_dirichlet_intro}
  \frac{1}{t}\int_\M R_t(\bx,\by)(u(\bx)-u(\by))\mathd\mu_\by-\frac{2}{\beta}\int_{\p\M}&\rhk \left(b(\by)-u(\by)\right)\mathd \tau_\by\\
&=\int_\M \rhk f(\by)d\mu_\by.\nonumber
\end{align}
After discretizing this integral equation on point cloud, we get a numerical scheme to solve the Dirichlet problem \eqref{eqn:dirichlet-intro}.
The detailed algorithm is given in Section 2.

The main contribution of this paper is that, for Poisson equation with Dirichlet boundary condition,  we prove that the numerical solution computed 
by the PIM converges to the exact solution in $H^1$ norm as the point cloud converges to the underlying smooth manifold. 
In \cite{SS-neumann}, the convergence of the point integral method for Neumann problem has been proved. The method used in this paper is similar as that 
in \cite{SS-neumann}. The main difference is that in Dirichlet problem, we need to consider the effect of the boundary term which introduce more difficulties 
in the analysis.


The remaining of this paper is organized as following. In Section 2, 
we describe the point integral method for Poisson equation with Dirichlet boundary condition. 
The convergence result is stated in Section 3. The structure of the proof is shown in Section 4. 
The main body of the proof is in Section 5, Section 6. 
Finally, conclusions and discussion on the future work are given in Section 7. 

\section{Point integral method}
In this paper, we consider the Dirichlet problem for the Poisson equation on 
a smooth, compact $k$-dimensional submanifold $\mathcal{M}$ 
in $\R^d$. 
\begin{equation} 
\left\{\begin{array}{rl}
      -\Delta_\mathcal{M} u(\bx)=f(\bx),&\bx\in \mathcal{M} \\
      u(\bx)=b(\bx),& \bx\in \p \mathcal{M}
\end{array}
\right.  
\label{eqn:dirichlet} 
\end{equation}
where $\Delta_\mathcal{M}$ is the Laplace-Beltrami operator on $\mathcal{M}$ which has been defined in previous section.  

Based on the integral approximation \eqref{eq:integral-intro}, the Dirichlet problem \eqref{eqn:dirichlet} is well approximated by 
an integral equation,
\begin{equation}
  L_tu(\bx)-2\int_{\p\M}\rhk \frac{\p u}{\p\bn}(\by)\mathd \tau_\by=\int_\M \rhk f(\by)d\mu_\by, 
\label{eqn:integral}
\end{equation}
where $\bn$ is the out normal of $\M$, $R_t(\bx,\by)$ and $\bar{R}_t(\bx,\by)$ are kernel functions 
given in \eqref{eq:kernel}, $L_t$ is an integral operator defined as 
\begin{equation}
\label{eq:Lt}
  L_tu(\bx)=\frac{1}{t}\int_\M R_t(\bx,\by)(u(\bx)-u(\by))\mathd\mu_\by, 
\end{equation}
In the integral equation \eqref{eqn:integral}, the Neumann boundary is natrual. It does not apply on the Dirichlet problem directly, 
since the normal derivative $\frac{\p u}{\p\bn}$
is not given in the Dirichlet problem. To enforce the Dirichlet boundary, 
we use the Robin boundary to bridge the Neumann boundary and the Dirichlet boundary. 
In particular, we consider the following Robin problem
\begin{equation} 
\left\{\begin{array}{rl}
      -\Delta_\mathcal{M} u(\bx)=f(\bx),&\bx\in \mathcal{M} \\
      u(\bx)+\beta\frac{\p u}{\p \bn}=b(\bx),& \bx\in \p \mathcal{M}, 
\end{array}
\right.  
\label{eqn:robin} 
\end{equation}
The Robin problem approximates the Dirichlet problem~\eqref{eqn:dirichlet} when the parameter $\beta$ is small.
On the other hand, it can be approximated by following integral equation
\begin{equation}
  L_tu(\bx)-\frac{2}{\beta}\int_{\p\M}\rhk \left(b(\by)-u(\by)\right)\mathd \tau_\by=\int_\M \rhk f(\by)d\mu_\by
\label{eqn:integral_dirichlet}
\end{equation}
when the parameter $t$ is small.  

In the PIM, we assume a set of points $P$ samples the submanifold $\M$ and a subset $S\subset P$ samples the boundary of $\M$.
List the points in $P$ respectively $S$ in a fixed
order $P=(\bfp_1, \cdots, \bfp_n)$ where $\bfp_i \in \R^d, 1\leq i\leq n$, respectively $S=(\bfs_1, \cdots, \bfs_m)$ where $\bfs_i \in P$.
In addition, assume we are given
two vectors $\bV = (V_1, \cdots, V_n)$ where $V_i$ is an volume weight of $\bfp_i$ in $\M$, and
$\bA= (A_1, \cdots, A_m)$ where $A_i$ is an area weight of $\bfs_i$ in $\p \M$.

The integral equation \eqref{eqn:integral_dirichlet} is easy to be discretized over the point cloud 
 $(P,S,\bV,\bA)$ to obtain the following linear system of $\bfu = (u_1, \cdots, u_n)$,
\begin{equation}
\mathcal{L} \bfu (\bfp_i) - \frac{2}{\beta}\sum_{\bfs_j \in S} \rhkpisj (b(\bfs_j)-u(\bfs_j))A_j = 
\sum_{\bfp_j \in P} \rhkpipj f(\bfp_j)V_j.
\label{eqn:dis_dirichlet}
\end{equation}
where
 \begin{equation}
\label{eq:Lt-dis}
 \mathcal{L} \bfu (\bfp_i) = \invt\sum_{\bfp_j \in P} \hkpipj(u_i - u_j)V_j
 \end{equation}
is the discrete Laplace operator.

The purpose of this paper is to show that the solution of the linear system \eqref{eqn:dis_dirichlet} converges to the 
solution of the Dirichlet problem \eqref{eqn:dirichlet} as the point cloud $P$ converges to the underlying manifold $\M$ and $t,\beta$ go to 0.
The idea to prove the convergence is similar as that in \cite{SS-neumann}. The detailed analysis will be given in the subsequent sections.



\section{Assumptions and Results}

The main contribution in this paper is to establish the convergence results for the point integral method
for solving the problem~\eqref{eqn:dirichlet}.
To simplify the notation and make the proof concise, we consider the homogeneous Dirichlet boundary
conditions, i.e. 
\begin{equation} 
\left\{\begin{array}{rl}
      -\Delta_\mathcal{M} u(\bx)=f(\bx),&\bx\in \mathcal{M} \\
      u(\bx)=0,& \bx\in \p \mathcal{M}
\end{array}
\right.  
\label{eqn:dirichlet-homo} 
\end{equation}
 The analysis can be easily generalized to 
the non-homogeneous boundary conditions. 

The corresponding numerical scheme is 
\begin{equation}
\invt\sum_{\bfp_j \in P} R_t(\bfp_i,\bfp_j)(u(\bfp_i) - u(\bfp_j))V_j 
+\frac{2}{\beta}\sum_{\bfs_j\in S}\bar{R}_t(\bfp_i,\bfs_j)u(\bfs_j)A_j = \sum_{\bfp_j \in P} \bar{R}_t(\bfp_i,\bfp_j) f_jV_j.
\label{eqn:dis-homo}
\end{equation}
where $f_j=f(\bfp_j)$.

Before proving the convergence of the point integral method, we need to clarify the meaning of the convergence between the point cloud $(P,S,\mathbf{V},\mathbf{A})$
and the manifold $\M$. In this paper, we consider the convergence in the sense that $$h(P, S, \mathbf{V}, \mathbf{A}, \M, \p\M)\rightarrow 0$$ 
where $h(P, S, \mathbf{V}, \mathbf{A}, \M, \p\M)$ is the 
{\it integral accuracy index} defined as following,
\begin{definition}[Integral Accuracy Index]
  \label{def:h}
For the point cloud $(P,S,\mathbf{V},\mathbf{A})$ which samples the manifold $\M$ and $\p\M$, 
the integral accuracy index $h(P, S, \mathbf{V}, \mathbf{A}, \M, \p\M)$ is defined as
\begin{align*}
  h(P, S, \mathbf{V}, \mathbf{A}, \M, \p\M)=\max\left\{h(P,\mathbf{V},\M),h(S,\mathbf{A},\p\M)\right\}
\end{align*}
and
\begin{align*}
h(P,\mathbf{V},\M)=\sup_{f\in C^1(\M)}\frac{\left|\int_\M f(\by) \mathd\mu_\by - \sum_{\bfp_i\in P} f(\bfp_i)V_i\right|}{|\text{\rm supp}(f)|\|f\|_{C^1(\M)}},\\
h(S,\mathbf{A},\p\M)=\sup_{g\in C^1(\p\M)}\frac{\left|\int_{\p\M} g(\by) \mathd\tau_{\by} - \sum_{\bfs_i\in S} g(\bfs_i)A_i\right|}{|\text{\rm supp}(g)|\|g\|_{C^1(\p\M)}}
\end{align*}
To simplify the notation, we denote $h=h(P, S, \mathbf{V}, \mathbf{A}, \M, \p\M)$ in the rest of the paper.
\end{definition}

Using the definition of integrable index, we say that the point cloud $(P,S,\mathbf{V},\mathbf{A})$ converges to the manifold $\M$ if 
$h\rightarrow 0$. The convergence analysis in this paper is based on the assumption that $h$ is small enough.

To get the convergence, we also need some assumptions on the regularity of the submanifold $\M$ and
the integral kernel function $R$.
\begin{assumption}
\label{assumptions}
\begin{itemize}
\item[]
\item \rm Smoothness of the manifold: $\M, \p\M$ are both compact and $C^\infty$ smooth $k$-dimensional submanifolds isometrically embedded in a Euclidean space $\mathbb{R}^d$.
\item \rm Assumptions on the kernel function $R(r)$:
\begin{itemize}
\item[\rm (a)] \rm Smoothness: $R\in C^2(\mathbb{R}^+)$;
\item[(b)] Nonnegativity: $R(r)\ge 0$ for any $r\ge 0$.
\item[(c)] Compact support:
$R(r) = 0$ for $\forall r >1$;
\item[(d)] Nondegeneracy:
 $\exists \delta_0>0$ so that $R(r)\ge\delta_0$ for $0\le r\le\frac{1}{2}$.
\end{itemize}
\end{itemize}
\end{assumption}
\begin{remark}
  The assumption on the kernel function is very mild. The compact support assumption can be relaxed to exponentially decay, like Gaussian kernel.
 In the nondegeneracy assumption, $1/2$ may be replaced by a positive number $\theta_0$ with $0<\theta_0<1$. Similar assumptions on the kernel function 
is also used in analysis the nonlocal diffusion problem \cite{DLZ13}.
\end{remark}


All the analysis in this paper is under the assumptions in Assumption \ref{assumptions} and $h$, $t$ are small enough. 
In the theorems and the proof, without introducing any confusions, we omit the statement of the assumptions.

To compare the discrete numerical solution with the continuous exact solution, we 
interpolate the discrete solution $\bfu = (u_1, \cdots, u_n)$ of
the problem~\eqref{eqn:dis_dirichlet} onto the smooth manifold using following interpolation operator: 
\begin{equation}
\label{eqn:interp_dirichlet}
I_{\bff}(\bfu) (\bx) = \frac{ \sum_{\bfp_j \in P} \hkxpj u_j V_j - \frac{2t}{\beta} \sum_{\bs_j \in S} \rhkxsj u_jA_j + t\sum_{\bfp_j \in P} \rhkxpj f_jV_j} {\sum_{\bfp_j \in P} \hkxpj V_j }.
\end{equation}
where $\bff=[f_1,\cdots,f_n]=[f(\bfp_1),\cdots,f(\bfp_n)]$. 
It is easy to verify that $I_{\bff}(\bfu)$ interpolates $\bfu$ at the sample points $P$, i.e., 
$I_{\bff}(\bfu)(\bfp_j) = u_j$ for any $j$. In the  
analysis, $I_{\bff}(\bfu)$ is used as the numerical solution of \eqref{eqn:dirichlet} instead of the discrete solution $\bfu$.

 Now, we can state the main result. 

\begin{theorem}
 Let
$u$ is the solution to Problem~\eqref{eqn:dirichlet-homo} with  $f\in C^1(\M)$. 
Set ${\bf f} = (f(\bfp_1), \cdots, f(\bfp_n))$. If 
the vector $\bfu$ is the solution to the problem~\eqref{eqn:dis-homo}. 
There exists constants $C$, $T_0$ and $r_0$ only depend on 
$\M$ and $\p\M$,
so that for any $t\le T_0$,
\begin{equation}
  \|u-I_{\bff}(\bfu)\|_{H^1(\M)} \leq C\left(\frac{h}{t^{3/2}}+t^{1/2}+\beta^{1/2}\right)\|f\|_{C^1(\M)}. 
\end{equation}
as long as $\frac{h}{t^{3/2}}\le r_0$ and $\frac{\sqrt{t}}{\beta}\le r_0$. 
\label{thm:poisson_dirichlet}
\end{theorem}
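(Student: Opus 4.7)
The plan is to prove the theorem by the standard decomposition for showing convergence of a numerical method built from a continuum approximation plus a discretization. I introduce two intermediate solutions: let $u_\beta$ denote the solution of the Robin problem \eqref{eqn:robin} with $b=0$, and let $u_t$ denote the continuous solution of the integral equation \eqref{eqn:integral_dirichlet} (with $b=0$ and the same $\beta$). Then by the triangle inequality
\begin{equation*}
\|u - I_{\bff}(\bfu)\|_{H^1(\M)} \le \|u - u_\beta\|_{H^1(\M)} + \|u_\beta - u_t\|_{H^1(\M)} + \|u_t - I_{\bff}(\bfu)\|_{H^1(\M)},
\end{equation*}
and the three terms on the right should deliver the three summands $\beta^{1/2}$, $t^{1/2}$, and $h/t^{3/2}$ respectively.

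For the first term (Robin-to-Dirichlet), I would use a classical PDE energy estimate: writing $w = u - u_\beta$, $w$ solves $-\Delta_\M w = 0$ with $w + \beta \p_\bn w = -\beta \p_\bn u$ on $\p\M$. Since $f\in C^2(\M)$ and $b=0$, elliptic regularity gives $\p_\bn u$ bounded on $\p\M$ in terms of $\|f\|_{C^2}$; testing the Robin problem against $w$ yields $\|\nabla w\|_{L^2}^2 + \beta^{-1}\|w\|_{L^2(\p\M)}^2 \lesssim \|\p_\bn u\|_{L^2(\p\M)}\|w\|_{L^2(\p\M)}$, which after a trace estimate produces $\|w\|_{H^1(\M)}\lesssim \beta^{1/2}\|f\|_{C^2}$, the expected rate arising from the boundary layer of thickness $\beta$.

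For the second term I would use the consistency-and-stability framework. Consistency: substitute $u_\beta$ into \eqref{eqn:integral_dirichlet} and show the residual is $O(t^{1/2})$ in an appropriate weak norm, using the estimates developed in~\cite{SS-neumann} for approximating $-\Delta_\M$ by $L_t$ plus its natural boundary correction, together with the fact that we replaced $\p_\bn u_\beta$ by $-u_\beta/\beta$, which is exact for $u_\beta$. Stability: this is precisely the role of the intermediate operator $K_t$ highlighted by the authors. I would invoke Theorems~\ref{thm:regularity_robin} and \ref{thm:regularity_boundary_robin}, which bound the $H^1$-norm of a function in terms of a suitable energy associated with $K_t$, treating the boundary integral as a controlled perturbation of $L_t$; combined with the consistency estimate this gives $\|u_\beta - u_t\|_{H^1}\lesssim t^{1/2}\|f\|_{C^2}$.

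For the third term I would treat it as a pure discretization/quadrature error. Writing the residual obtained by plugging $u_t$ into the discretized equation, each sum over $P$ or $S$ differs from the corresponding integral by $O(h)$ per sample evaluation times a factor $O(t^{-(k+1)/2})$ coming from derivatives of the kernel $R_t$ near the support boundary; passing to the $H^1$ norm via the interpolation operator $I_{\bff}$ costs another inverse power of $t^{1/2}$, producing the familiar $h/t^{3/2}$ discretization rate already established in~\cite{SS-neumann}. Applying the discrete analog of the stability bound (again using the $K_t$-regularity theorems transported to the point cloud) then controls $\|u_t-I_{\bff}(\bfu)\|_{H^1}$ by this residual. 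The main obstacle, and the step I would budget the most effort for, is the stability of $K_t$: the boundary term $\tfrac{2}{\beta}\int_{\p\M}\bar R_t u\,\mathd\tau$ is of the same order as the interior dissipation when $\sqrt{t}/\beta$ is $O(1)$, so one must show that it acts as a sign-definite perturbation (not a sink of coercivity) and extract uniform-in-$t$ control, which is exactly why the smallness condition $\sqrt{t}/\beta \le r_0$ appears in the hypothesis.
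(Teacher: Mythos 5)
Your plan matches the paper's proof almost line for line: the same telescoping through the Robin solution $u_\beta$ and the integral-equation solution $u_t$, the same energy argument giving $\|u-u_\beta\|_{H^1(\M)}\lesssim\beta^{1/2}\|u\|_{H^2(\M)}$ (Theorem~\ref{thm:diff_D2R}), consistency plus $K_t$-stability (Theorems~\ref{thm:integral_error_robin}, \ref{thm:regularity_robin}, \ref{thm:regularity_boundary_robin}) for the $t^{1/2}$ term, and the quadrature-plus-stability bound (Theorem~\ref{thm:dis_error_robin} with Theorem~\ref{thm:regularity_robin}) for the $h/t^{3/2}$ term. The one mechanism you leave implicit, and which the paper spells out, is that the raw consistency residual $K_t(u_\beta-u_t)$ is only $O(t^{1/4})$ in $L^2(\M)$; the paper peels off the boundary-concentrated piece $\varphi$, solves an auxiliary problem $K_t v=\varphi$, and applies the sharper Theorem~\ref{thm:regularity_boundary_robin} to $v$, which is exactly what recovers the $t^{1/2}$ rate you correctly anticipate.
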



\section{Structure of the Proof}
\label{sec:intermediate}

In the point integral method, we use Robin boundary problem \eqref{eqn:robin-homo} to approximate the Dirichlet boundary problem \eqref{eqn:dirichlet-homo}. 
First, we show that the solution of the Robin problem converges to the solution of the Dirichlet problem as the parameter $\beta\rightarrow 0$.
\begin{theorem} Suppose $u$ is the solution of the Dirichlet problem \eqref{eqn:dirichlet-homo} and 
$u_{\beta}$ is the solution of the Robin problem 
\begin{equation} 
\left\{\begin{array}{rl}
      -\Delta_\mathcal{M} u(\bx)=f(\bx),&\bx\in \mathcal{M} \\
      u(\bx)+\beta \frac{\p u}{\p \bn}(\bx)=0,& \bx\in \p \mathcal{M}
\end{array}
\right.  
\label{eqn:robin-homo} 
\end{equation}
then 
  \begin{eqnarray}
    \|u-u_{\beta}\|_{H^1(\M)}\le C \beta^{1/2}\|f\|_{L^2(\M)}.\nonumber
  \end{eqnarray}
\label{thm:diff_D2R}
\end{theorem}
\begin{proof}
  Let $w=u-u_{\beta}$, then $w$ satisfies
  \begin{eqnarray}
    \left\{\begin{array}{rl}
\Delta_\M w=0,& \mbox{on}\;\M,\\
w+\beta \frac{\p w}{\p \bn}=\beta\frac{\p u}{\p \bn},& \mbox{on}\;\p\M.
\end{array}\right.\nonumber
  \end{eqnarray}
By multiplying $w$ on both sides of the equation and integrating by parts, we can get
  \begin{eqnarray}
    0&=&\int_\M w\Delta_\M w\mathd \mu_\bx\nonumber\\
&=&-\int_\M |\nabla w|^2\mathd \mu_\bx+\int_{\p\M} w\frac{\p w}{\p \bn}\mathd\tau_\bx\nonumber\\
&=&-\int_\M |\nabla w|^2\mathd \mu_\bx-\frac{1}{\beta}\int_{\p\M} w^2\mathd\tau_\bx
+\int_{\p\M} w \frac{\p u}{\p \bn}\mathd\tau_\bx\nonumber\\
&\le & -\int_\M |\nabla w|^2\mathd \mu_\bx-\frac{1}{2\beta}\int_{\p\M} w^2\mathd\tau_\bx
+2\beta \int_{\p\M} \left|\frac{\p u}{\p \bn}\right|^2\mathd\tau_\bx,\nonumber
  \end{eqnarray}
which implies that
\begin{eqnarray}
  \int_\M |\nabla w|^2\mathd \mu_\bx+\frac{1}{2\beta}\int_{\p\M} w^2\mathd\tau_\bx
\le 2\beta \int_{\p\M} \left|\frac{\p u}{\p \bn}\right|^2\mathd\tau_\bx.\nonumber
\end{eqnarray}
Moreover, we have
\begin{eqnarray}
  \|w\|_{L^2(\M)}^2\le C\left(\int_\M |\nabla w|^2\mathd \mu_\bx+\frac{1}{2\beta}\int_{\p\M} w^2\mathd\tau_\bx\right)
\le C\beta \int_{\p\M} \left|\frac{\p u}{\p \bn}\right|^2\mathd\tau_\bx.\nonumber
\end{eqnarray}
Combining above two inequalities and using the trace theorem, we get
\begin{eqnarray}
  \|u-u_{\beta}\|_{H^1(\M)}\le C\beta^{1/2}\left\|\frac{\p u}{\p \bn}\right\|_{L^2(\p\M)} \le C\beta^{1/2}\|u\|_{H^2(\M)}.\nonumber
\end{eqnarray}
The proof is complete using that
\begin{equation*}
  \|u\|_{H^2(\M)}\le C\|f\|_{L^2(\M)}.
\end{equation*}
\end{proof}

Next, we prove the solution of \eqref{eqn:dis-homo} converges to the solution of the Robin prolem \eqref{eqn:robin-homo} as
$h$, $t$ go to 0.
Comparing to the Neumann boundary problem considered in~\cite{SS-neumann}, in \eqref{eqn:dis-homo}, the unknown variables $u_i$
not only appear in the discrete Laplace operator $L_t$, but also appear in an integral over the boundary. 
Therefore, instead of showing the stability for the integral Laplace operator $L_t$ as in~\cite{SS-neumann}, 
we need to consider the stability for the following integral operator
\begin{eqnarray}
K_{t}u(\bx) &=& \invt\int_\M \hk(u(\bx) - u(\by))\mathd\mu_\by+\frac{2}{\beta}\int_{\p\M}\rhk u(\by)\mathd \tau_\by.
\label{eqn:laplace_integral_dirichlet}
\end{eqnarray}
This is the most difficult part in this paper.

\begin{theorem}
Let $u(\bx)$ solves following equation with $r\in H^1(\M)$
\begin{eqnarray}
  K_t u = r.\nonumber
\end{eqnarray}
Then, there exist constants $C, T_0, r_0>0$ independent on $t$, such that
\begin{eqnarray}
  \|u\|_{H^1(\M)}\le C\left(\|r\|_{L^2(\M)}+\frac{t}{\sqrt{\beta}}\| r\|_{H^1(\M)}\right),\nonumber
\end{eqnarray}
as long as $t\le T_0$ and $\frac{\sqrt{t}}{\beta}\le r_0$.
\label{thm:regularity_robin}
\end{theorem}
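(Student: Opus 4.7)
The plan is to treat the boundary integral in $K_t$ as a perturbation of the Neumann-type operator $L_t$, whose stability is already established in~\cite{SS-neumann}. Rewrite $-K_tu=r$ in the equivalent form
\begin{equation*}
 -L_tu = r + g,\qquad g(\bx):=\frac{2}{\beta}\int_{\p\M}\rhk u(\by)\mathd\tau_\by,
\end{equation*}
so that $g$ is a source term induced by the boundary trace of $u$. The first ingredient is a direct energy estimate: pair $-K_tu=r$ with $u$ and integrate over $\M$. The interior bilinear form $\int_\M u\,L_tu\,\mathd\mu_\bx = \frac{1}{2t}\iint_{\M\times\M}R_t(\bx,\by)(u(\bx)-u(\by))^2\mathd\mu_\by\mathd\mu_\bx$ is nonnegative, while the boundary contribution equals $\frac{2}{\beta}\int_{\p\M} u^2(\by)\bigl(\int_\M\rhk\mathd\mu_\bx\bigr)\mathd\tau_\by$, and the inner integral is bounded below by a positive constant uniformly for $\by\in\p\M$ (a fixed fraction of the $\bar R_t$-support lies in $\M$ near the smooth boundary). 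This yields the key bound $\beta^{-1}\|u\|_{L^2(\p\M)}^2 \le C\|u\|_{L^2(\M)}\|r\|_{L^2(\M)}$.

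Next, apply the stability of $L_t$ from~\cite{SS-neumann} to $-L_tu=r+g$, which I expect to give (modulo handling the constant mode) a bound of the form $\|u\|_{H^1(\M)}\le C\bigl(\|r\|_{L^2}+\|g\|_{L^2}\bigr) + Ct\bigl(\|r\|_{H^1}+\|g\|_{H^1}\bigr)$. The constant mode $\bar u$ is pinned down by integrating the equation against $1$ and using $\int_\M L_tu\,\mathd\mu_\bx=0$ (from symmetry of $R_t$), which reduces to a direct constraint on the weighted average $\int_{\p\M}u\,\mathd\tau_\by$. Cauchy--Schwarz against the kernel $\rhk$ gives $\|g\|_{L^2(\M)}\le C\beta^{-1}t^{-1/4}\|u\|_{L^2(\p\M)}$, and differentiation of $\rhk$ produces an extra $t^{-1}$ factor in $\|g\|_{H^1(\M)}$. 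Combining with the energy bound on $\|u\|_{L^2(\p\M)}$ and the trace theorem, the smallness hypothesis $\sqrt{t}/\beta\le r_0$ is precisely what allows absorption of the $\|g\|_{L^2}$-type contribution into the left-hand side; the $t\|g\|_{H^1}$ contribution then balances the factor $\beta^{-1/2}$ from the boundary energy to yield the advertised coefficient $t/\sqrt\beta$ in front of $\|r\|_{H^1}$.

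The main obstacle is closing the circular dependence between $\|u\|_{H^1(\M)}$, $\|u\|_{L^2(\p\M)}$, and $g$: the $L_t$-stability controls $u$ in terms of $g$, but $g$ depends on $u|_{\p\M}$, which by trace is again bounded only by $\|u\|_{H^1}$. Both the energy inequality from the first step and the smallness condition $\sqrt{t}/\beta\le r_0$ are essential to break this loop simultaneously. A secondary difficulty is carefully tracking the kernel scalings of $\bar R_t$ near $\p\M$, which differ from those of $R_t$ in the interior and drive the specific $t$- and $\beta$-exponents in the final estimate.
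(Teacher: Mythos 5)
Your high-level strategy (treat the $\tfrac{2}{\beta}\int_{\p\M}\bar R_t\,u$ term as a perturbation of $L_t$ and run an energy argument) matches the paper's spirit, but the central step in your plan does not hold as stated, and it is precisely the step the paper spends most of its proof on.

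You claim that pairing $-K_tu=r$ with $u$ gives a boundary contribution equal to $\tfrac{2}{\beta}\int_{\p\M}u^2(\by)\bigl(\int_\M\bar R_t(\bx,\by)\,\mathd\mu_\bx\bigr)\mathd\tau_\by$, from which the key bound $\beta^{-1}\|u\|^2_{L^2(\p\M)}\le C\|u\|_{L^2}\|r\|_{L^2}$ follows. But the actual boundary pairing is
\begin{equation*}
\frac{2}{\beta}\int_{\p\M}u(\by)\left(\int_\M\bar R_t(\bx,\by)\,u(\bx)\,\mathd\mu_\bx\right)\mathd\tau_\by,
\end{equation*}
in which the inner integrand carries $u(\bx)$, not $u(\by)$. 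This is a smoothed, nonlocal bilinear form, not a pointwise multiplication by $\bar w_t(\by)=\int_\M\bar R_t(\bx,\by)\mathd\mu_\bx$, and it has no obvious sign. The entire technical content of the paper's proof is devoted to establishing that this form is approximately coercive: it writes $\bar R_t(\bx,\by)=\tfrac{\bar w_t(\by)}{w_t(\by)}R_t(\bx,\by)+$ a commutator, controls the commutator by a weighted double-difference integral $\iint Q(\bx,\bz)(u(\bx)-u(\bz))^2$ that is absorbed (with a factor $\sqrt t$) into $\langle u,L_tu\rangle$ itself, and then uses the representation $w_tu=w_tv-\tfrac{2t}{\beta}\int_{\p\M}R_t\,u-t\,r$ to replace the resulting $\int_{\p\M}\bar w_t\,u\,v$ by $\int_{\p\M}\bar w_t\,u^2$ plus controllable remainders. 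Your proposal skips all of this by asserting the positive term outright.

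A secondary issue: applying the $L_t$-stability from~\cite{SS-neumann} to $-L_tu=r+g$ needs $r+g$ to be orthogonal to constants, and you propose to pin down the mean of $u$ by integrating the equation against $1$. What that actually yields is $\tfrac{2}{\beta}\int_{\p\M}\bar w_t(\by)\,u(\by)\,\mathd\tau_\by=-\int_\M r$, a constraint on a weighted boundary trace, not on the $\M$-mean of $u$; converting this into control of the constant mode again requires coercivity of the boundary term, so you are back to the same gap. The paper sidesteps this entirely by never decomposing into $L_t$ plus source: it works directly with the $K_t$ energy, uses Theorem~\ref{thm:elliptic_v} to get $\|\nabla v\|_{L^2}^2\lesssim\langle u,L_tu\rangle$, and then recovers $\|\nabla u\|$ and $\|u\|_{L^2}$ from the representation formula together with the boundary coercivity that it has laboriously established.

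In short, the scaling heuristics in your second and third paragraphs are consistent with the final exponents, but the ``direct energy estimate'' you lean on is not an identity, and without the commutator/coercivity analysis the circular dependence between $\|u\|_{H^1(\M)}$, $\|u\|_{L^2(\p\M)}$, and $g$ that you correctly identify as the main obstacle is not actually closed.
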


To apply the stability result, we need $L_2$ estimate of $K_t(u_\beta-I_{\bff}(\bfu))$ and $\nabla K_t(u_\beta-I_{\bff}(\bfu))$.
In the analysis, the truncation error $K_t(u_\beta-I_{\bff}(\bfu))$ is further splitted to two terms
\begin{equation*}
  K_t(u_\beta-I_{\bff}(\bfu))=K_t(u_\beta-u_{\beta,t}))+K_t(u_{\beta,t}-I_{\bff}(\bfu))
\end{equation*}
where $u_{\beta,t}$ is the solution of the integral equation 
\begin{equation}
  \label{eq:integral-homo}
\invt\int_{\M} \hk(u(\bx) - u(\by))\mathd\mu_\by+\frac{2}{\beta}\int_{\p\M}\rhk u(\by)\mathd \tau_\by=\int_\M f(\by)\bar{R}_t(\bx,\by)\mathd \mu_\by.
\end{equation}
The first term $K_t(u_\beta-u_{\beta,t})$ is same as that in the Neumann boundary problem \cite{SS-neumann}. It also has boundary layer structure. 
\begin{theorem}
 Let $u(\bx)$ be the solution
of the problem~\eqref{eqn:dirichlet-homo} and $u_t(\bx)$ be the solution of the corresponding
integral equation \eqref{eq:integral-homo}. Let
\begin{align}
\label{eq:error_boundary}
&I_{bd} =\sum_{j=1}^d \int_{\p\M}n^j(\by)(\bx-\by)\cdot\nabla(\nabla^ju(\by))\rhk p(\by)\mathd \tau_\by,
\end{align}
and
\begin{align}
K_t (u- u_t)=I_{in}+I_{bd}\nonumber.
\end{align}
where $\bn(\by)=(n^1(\by),\cdots,n^d(\by))$ is the out normal vector of $\p\M$ at $\by$, $\nabla^j$ is the $j$th component of gradient $\nabla$.

If $u\in H^3(\M)$, then there exists constants
$C, T_0$ depending only on $\M$ and $p(\bx)$, so that,
\begin{eqnarray}
\label{eqn:integral_error_int}
\left\|I_{in}\right\|_{L^2(\M)}\le Ct^{1/2}\|u\|_{H^3(\mathcal{M})},\quad
\left\|\nabla I_{in}\right\|_{L^2(\M)} \leq C\|u\|_{H^3(\mathcal{M})},
\end{eqnarray}
as long as $t\le T_0$.
\label{thm:integral_error_robin}
\end{theorem}

The estimate of the second term, $K_t(u_{\beta,t}-I_{\bff}(\bfu))$, is given in following theorem.
\begin{theorem}
Let $u_t(\bx)$ be the solution of the problem~~\eqref{eq:integral-homo} and $\bfu$ be the solution of
the problem~\eqref{eqn:dis-homo}. If
$f\in C^1(\M)$ ,
 then there exists constants
$C, T_0$ depending only on $\M$, so that
\begin{eqnarray}
\|K_{t} \left(I_{\bff}\bfu - u_{t}\right)\|_{L^2(\M)} &\leq& \frac{Ch}{t^{3/2}}\|f\|_{C^1(\M)},\\
\label{eqn:dis_error_l2}
\|\nabla K_{t} \left(I_{\bff}\bfu - u_{t}\right)\|_{L^2(\M)} &\leq& \frac{Ch}{t^{2}}\|f\|_{C^1(\M)}.
\label{eqn:dis_error_dl2}
\end{eqnarray}
as long as $t\le T_0$ and $\frac{h}{\sqrt{t}}\le T_0$.
\label{thm:dis_error_robin}
\end{theorem}

Corresponding to the boundary layer structure in Theorem \ref{thm:integral_error_robin}, we need stability of $K_t$ for the boundary term.
\begin{theorem}
Let $u(\bx)$ solves the integral equation
\begin{eqnarray*}
  K_t u (\bx)= \int_{\p\M}\mathbf{b}(\by)\cdot(\bx-\by)\rhk \mathd \tau_\by.
\end{eqnarray*}
There exist constant $C>0, T_0>0$ independent on $t$, such that
\begin{eqnarray*}
  \|u\|_{H^1(\M)}\le C\sqrt{t}\;\|\mathbf{b}\|_{H^1(\M)}.
\end{eqnarray*}
as long as $t\le T_0$.
\label{thm:regularity_boundary_robin}
\end{theorem}

Theorem~\ref{thm:poisson_dirichlet} is an easy corollary from Theorems \ref{thm:diff_D2R}, Theorems \ref{thm:regularity_robin}, \ref{thm:dis_error_robin}, 
\ref{thm:integral_error_robin} and \ref{thm:regularity_boundary_robin}. The detailed proof is omitted here. 

Proof of Theorem \ref{thm:integral_error_robin} is essentially same as the proof of Theorem 4.3 in \cite{SS-neumann}.
In the rest of the paper, we prove Theorem 
  \ref{thm:regularity_robin}, \ref{thm:dis_error_robin} and \ref{thm:regularity_boundary_robin} respectively.

\section{Stability of $K_t$ (Theorem \ref{thm:regularity_robin} and \ref{thm:regularity_boundary_robin})}
\label{sec:stability}
In this section, we will prove Theorem \ref{thm:regularity_robin} and \ref{thm:regularity_boundary_robin}. 
Both these two theorems are concerned with the stability of $K_t$, which are essential in the convergence analysis. 

In the proof, we need following theorem which has been proved in \cite{SS-neumann}. 
\begin{theorem} For any function $u\in L^2(\mathcal{M})$, 
there exists a constant $C>0$ independent on $t$ and $u$, such that
  \begin{eqnarray*}
    \left<u,L_t u\right>_\M \ge C\int_\mathcal{M} |\nabla v|^2\mathd \mu_\bx
  \end{eqnarray*}
where $\left<f, g\right>_{\mathcal{M}} = \int_{\mathcal{M}} f(\bx)g(\bx)\mathd\mu_\bx$ for any $f, g\in L_2(\mathcal{M})$, and
\begin{eqnarray}
v(\bx)=\frac{C_t}{w_t(\bx)}\int_{\mathcal{M}}R\left(\frac{|\bx-\by|^2}{4t}\right) u(\by)\mathd \mu_\by,
\label{eqn:smooth_v}
\end{eqnarray}
and $w_t(\bx) = C_t\int_{\mathcal{M}}R\left(\frac{|\bx-\by|^2}{4t}\right)\mathd \mu_\by$.
\label{thm:elliptic_v}
\end{theorem}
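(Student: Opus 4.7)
The plan is to prove the claim by first symmetrizing the bilinear form $\langle u,L_tu\rangle_\M$ and then comparing the resulting non-local Dirichlet energy to the local Dirichlet energy of the smoothed function $v$. Exchanging $\bx\leftrightarrow\by$ in the definition of $L_t$ immediately gives
\[
\langle u, L_t u\rangle_\M=\frac{1}{2t}\int_\M\int_\M R_t(\bx,\by)\bigl(u(\bx)-u(\by)\bigr)^2\,\mathd\mu_\by\,\mathd\mu_\bx,
\]
so the theorem reduces to a Poincaré-type inequality: the local gradient of the averaged function $v$ is controlled by the non-local energy of $u$.

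Next I would derive an explicit representation of $\nabla v$. Starting from $w_t(\bx)v(\bx)=C_t\int_\M R(|\bx-\by|^2/4t)\,u(\by)\,\mathd\mu_\by$, differentiating tangentially in $\bx$ and using $\nabla w_t(\bx)=\int_\M\nabla_\bx R_t(\bx,\by)\,\mathd\mu_\by$ to subtract off a $v(\bx)$-multiple yields
\[
w_t(\bx)\,\nabla v(\bx)=\int_\M \nabla_\bx R_t(\bx,\by)\bigl(u(\by)-v(\bx)\bigr)\,\mathd\mu_\by.
\]
Assumption (c) on $R$ together with compactness of $\M$ gives a uniform lower bound $w_t(\bx)\ge c>0$ on $\M$, which lets us divide through. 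I would then apply a weighted Cauchy--Schwarz inequality against an auxiliary kernel $\tilde R_t$ arranged so that $|\nabla_\bx R_t|^2/\tilde R_t\le C R_t/t$ pointwise; this produces
\[
|\nabla v(\bx)|^2\le\frac{C}{t}\int_\M \tilde R_t(\bx,\by)\bigl(u(\by)-v(\bx)\bigr)^2\,\mathd\mu_\by.
\]

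To finish, I would use $(u(\by)-v(\bx))^2\le 2(u(\by)-u(\bx))^2+2(u(\bx)-v(\bx))^2$ and integrate over $\bx$. The first piece, after $\bx$-integration, is $Ct\langle u,L_tu\rangle_\M$ up to the change of kernel $R\to\tilde R$ absorbed in the constant, giving $C\langle u,L_tu\rangle_\M$ after dividing by $t$. For the second piece, one further Cauchy--Schwarz applied to $u(\bx)-v(\bx)=w_t(\bx)^{-1}\int R_t(\bx,\by)(u(\bx)-u(\by))\,\mathd\mu_\by$ yields $\int_\M (u-v)^2\,\mathd\mu_\bx\le Ct\langle u,L_tu\rangle_\M$, so again the $1/t$ prefactor cancels. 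Adding the two pieces closes the estimate. The main obstacle is the construction of $\tilde R_t$: one must simultaneously secure the pointwise gradient bound and uniform comparability of $\tilde R_t$- and $R_t$-weighted averages across $\M$, which requires exploiting the $C^2$-regularity and compact support of $R$. A secondary geometric point is that $\nabla v$ is the tangential gradient on $\M$, so $\nabla_\bx R_t$ involves projecting $\bx-\by$ onto $T_\bx\M$; for $|\bx-\by|=O(\sqrt t)$ this projection differs from the ambient vector by $O(t)$, which is absorbed in the constant for $t$ small.
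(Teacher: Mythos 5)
The paper does not actually prove Theorem~\ref{thm:elliptic_v} here; it is imported by citation from a companion paper, so I can only assess your plan on its own merits.

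Your overall structure is the right one: symmetrize $\langle u,L_tu\rangle_\M$ into the nonlocal Dirichlet form $\frac{1}{2t}\int\int R_t(u(\bx)-u(\by))^2$, write $w_t(\bx)\nabla v(\bx)=\int_\M\nabla_\bx R_t(\bx,\by)(u(\by)-v(\bx))\mathd\mu_\by$, estimate by a weighted Cauchy--Schwarz, split $(u(\by)-v(\bx))^2\le 2(u(\by)-u(\bx))^2+2(u(\bx)-v(\bx))^2$, and control the second piece via $u(\bx)-v(\bx)=w_t(\bx)^{-1}\int R_t(\bx,\by)(u(\bx)-u(\by))\mathd\mu_\by$. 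All of that is sound.

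The gap is in the Cauchy--Schwarz step and its aftermath. You insist on the \emph{pointwise} bound $|\nabla_\bx R_t|^2/\tilde R_t\le C R_t/t$. Because $|\nabla_\bx R_t|^2/R_t = C_t\,\frac{|\bx-\by|^2}{4t^2}\,\frac{|R'|^2}{R}$, this pointwise condition forces $\tilde R_t\ge c\,C_t$ throughout the support of $R$, i.e.\ $\tilde R$ bounded below all the way out to the edge of $\mathrm{supp}\,R$. Such a $\tilde R$ necessarily has \emph{strictly larger} support than $R$, and then your closing claim that replacing $R$ by $\tilde R$ in the nonlocal energy $\int\int\tilde R_t(u(\bx)-u(\by))^2$ is ``absorbed in the constant'' is not automatic: it is a genuine kernel-comparison lemma that requires a chaining/covering argument on the manifold (bounding the energy on balls of radius $2\sqrt t$ by the energy on balls of radius $\sqrt t$). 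You flag this as the main obstacle but do not resolve it.

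There is a clean way to close this hole without any chaining. What the argument actually uses is only the \emph{integrated} bound $\int_\M|\nabla_\bx R_t(\bx,\by)|^2/\tilde R_t(\bx,\by)\,\mathd\mu_\by\le C/t$, and this already holds with $\tilde R=R$ itself. Indeed, since $R\ge 0$, $R\in C^2$, and $R$ is compactly supported, one has the elementary Landau-type inequality $|R'(r)|^2\le 2\|R''\|_{L^\infty}R(r)$ for $r$ bounded away from $0$ (obtained from $0\le R(r+h)=R(r)+hR'(r)+\tfrac{h^2}{2}R''(\xi)$ with $h=-R'(r)/\|R''\|_{L^\infty}$, which stays in $\mathbb R^+$ once $r\ge 1/2$ because $|R'(r)|\le \|R''\|_{L^\infty}(1-r)$), while for $r\le 1/2$ Assumption~(c) gives $R\ge\delta_0$ so $|R'|^2/R$ is trivially bounded. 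Hence $|R'|^2/R\le C$ uniformly, and therefore $\int_\M|\nabla_\bx R_t|^2/R_t\,\mathd\mu_\by\le \frac{C}{t}\int_\M C_t\,\mathd\mu_\by\le C/t$. Taking $\tilde R=R$ removes the kernel change entirely, and both pieces of your splitting then land directly on $\langle u,L_tu\rangle_\M$ after using $w_t(\bx)\ge c>0$. With that modification your proof is complete.
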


\subsection{Stability of $K_t$ for interior term (Theorem \ref{thm:regularity_robin})}

Using Theorem \ref{thm:elliptic_v}, we have
  \begin{equation}
    \|\nabla v\|_{L^2(\M)}^2\le C \left<u,L_tu\right>=\int_\M u(\bx)r(\bx)\mathd\mu_\bx - \frac{2}{\beta}\int_\M u(\bx)\left(\int_{\p\M}\bar{R}_t(\bx,\by)u(\by)\mathd\tau_\by\right)
\mathd\mu_\bx.
\label{ineq:l2_dv}
  \end{equation}
where $v$ is the same as defined in Theorem \ref{thm:elliptic_v}.
We control the second term on the right hand side of \eqref{ineq:l2_dv} as follows. 
\begin{eqnarray}
   && \left|\int_\M u(\bx)\left(\int_{\p\M}\left(\bar{R}_t(\bx,\by)-\frac{\bar{w}_t(\by)}{w_t(\by)}R_t(\bx,\by)\right)u(\by)\mathd\tau_\by\right)\mathd\mu_\bx\right|\nonumber\\
&= & \left|\int_{\p\M} u(\by)\left(\int_{\M}\left(\bar{R}_t(\bx,\by)-\frac{\bar{w}_t(\by)}{w_t(\by)}R_t(\bx,\by)\right)u(\bx)\mathd\mu_\bx\right)\mathd\tau_\by\right|\nonumber\\  
&= & \left|\int_{\p\M} \frac{1}{w_t(\by)}u(\by)
\left(\int_{\M}\left(w_t(\by)\bar{R}_t(\bx,\by)-\bar{w}_t(\by)R_t(\bx,\by)\right)u(\bx)\mathd\mu_\bx\right)\mathd\tau_\by\right|\nonumber\\  
&\le & C\|u\|_{L^2(\p\M)}\left(\int_{\p\M}\left(\int_{\M}\left(w_t(\by)\bar{R}_t(\bx,\by)-\bar{w}_t(\by)R_t(\bx,\by)\right)u(\bx)\mathd\mu_\bx\right)^2\mathd\tau_\by\right)^{1/2}, 
\nonumber
  \end{eqnarray}
where $\bar{w}_t(\bx) = \int_\M \bar{R}_t(\bx, \by) \mathd \mu_\by$.
Noticing that
  \begin{eqnarray}
&&   \int_{\M}\left(w_t(\by)\bar{R}_t(\bx,\by)-\bar{w}_t(\by)R_t(\bx,\by)\right)u(\bx)\mathd\mu_\bx\nonumber\\
&=& \int_{\M}\int_\M R_t(\by,\mathbf{z})\bar{R}_t(\bx,\by)\left(u(\bx)-u(\B{z})\right)\mathd\mu_\bx\mathd \mu_{\B{z}},
\nonumber
  \end{eqnarray}
we have
  \begin{eqnarray}
&&\int_{\p\M}\left(\int_{\M}\left(w_t(\by)\bar{R}_t(\bx,\by)-\bar{w}_t(\by)R_t(\bx,\by)\right)u(\bx)\mathd\mu_\bx\right)^2\mathd\tau_\by\nonumber\\
&\le&    \int_{\p\M}\left(\int_{\M}\int_\M R_t(\by,\mathbf{z})\bar{R}_t(\bx,\by)\left(u(\bx)-u(\B{z})\right)\mathd\mu_\bx\mathd \mu_{\B{z}}\right)^2\mathd\tau_\by
\nonumber\\
&\le &  \int_{\p\M}\left(\int_{\M}\int_\M R_t(\by,\mathbf{z})\bar{R}_t(\bx,\by)\mathd\mu_\bx\mathd \mu_{\B{z}}\right)\left(\int_{\M}\int_\M R_t(\by,\mathbf{z})\bar{R}_t(\bx,\by)\left(u(\bx)-u(\B{z})\right)^2\mathd\mu_\bx\mathd \mu_{\B{z}}\right)\mathd\tau_\by\nonumber\\
&\le & C  \left(\int_{\M}\int_\M \left(\int_{\p\M}R_t(\by,\mathbf{z})\bar{R}_t(\bx,\by)\mathd\tau_\by\right)\left(u(\bx)-u(\B{z})\right)^2\mathd\mu_\bx\mathd \mu_{\B{z}}\right)\nonumber\\
&=&  C  \left(\int_{\M}\int_\M Q(\bx,\B{z})\left(u(\bx)-u(\B{z})\right)^2\mathd\mu_\bx\mathd \mu_{\B{z}}\right),\nonumber
  \end{eqnarray}
where
\begin{eqnarray}
Q(\bx,\B{z})=\int_{\p\M}R_t(\by,\mathbf{z})\bar{R}_t(\bx,\by)\mathd\tau_\by.\nonumber
\end{eqnarray}
Notice that $Q(\bx,\B{z})=0$ if $\|\by-\B{z}\|^2\ge 16t$, 
and $|Q(\bx,\B{z})|\le C C_t/\sqrt{t}$. We have 
\begin{eqnarray}
  |Q(\bx,\B{z})|\le \frac{CC_t}{\sqrt{t}} R\left(\frac{\|\bx-\B{z}\|^2}{32t}\right).\nonumber
\end{eqnarray}
Then, we obtain the following estimate,
\begin{eqnarray}
\label{eq:est_Q}
 && \left| \left(\int_{\M}\int_\M Q(\bx,\B{z})\left(u(\bx)-u(\B{z})\right)^2\mathd\mu_\bx\mathd \mu_{\B{z}}\right)\right|\\
&\le & \left| \frac{C}{\sqrt{t}}\left(\int_{\M}\int_\M C_t R\left(\frac{\|\bx-\B{z}\|^2}{32t}\right)\left(u(\bx)-u(\B{z})\right)^2\mathd\mu_\bx\mathd \mu_{\B{z}}\right)\right|\nonumber\\
&\le & \left| \frac{C}{\sqrt{t}}\left(\int_{\M}\int_\M C_t R\left(\frac{\|\bx-\B{z}\|^2}{4t}\right)\left(u(\bx)-u(\B{z})\right)^2\mathd\mu_\bx\mathd \mu_{\B{z}}\right)\right|\nonumber\\
&\le&C\sqrt{t}\left(\left|\int_\M u(\bx)r(\bx)\mathd\mu_\bx\right| + \frac{1}{\beta}\left|\int_\M u(\bx)\left(\int_{\p\M}\bar{R}_t(\bx,\by)u(\by)\mathd\tau_\by\right)\mathd\mu_\bx\right|\right)\nonumber\\
&\le &C\sqrt{t}\|u\|_{L^2(\M)}\|r\|_{L^2(\M)} + \frac{C\sqrt{t}}{\beta}\left|\int_\M u(\bx)\left(\int_{\p\M}\bar{R}_t(\bx,\by)u(\by)\mathd\tau_\by\right)\mathd\mu_\bx\right|.\nonumber
\end{eqnarray}
On the other hand,
  \begin{eqnarray}
&&    \int_\M u(\bx)\left(\int_{\p\M}\frac{\bar{w}_t(\by)}{w_t(\by)}R_t(\bx,\by)u(\by)\tau_\by\right)\mathd\mu_\bx\nonumber\\
&= & \int_{\p\M} \frac{\bar{w}_t(\by)}{w_t(\by)}u(\by)\left(\int_{\M}R_t(\bx,\by)(u(\bx)-u(\by))\mathd\mu_\bx\right)\mathd\tau_\by+\int_{\p\M}\bar{w}_t(\by)u^2(\by)\mathd\tau_\by \nonumber\\
&= & \int_{\p\M} \bar{w}_t(\by)u(\by)\left(v(\by)-u(\by)\right)\mathd\tau_\by+\int_{\p\M}\bar{w}_t(\by)u^2(\by)\mathd\tau_\by,
\nonumber
  \end{eqnarray}
where $v$ is the same as defined in \eqref{eqn:smooth_v}.
Since $u$ solves $K_t u=r(\bx)$, we have
\begin{eqnarray}
 w_t(\bx)u(\bx)=w_t(\bx)v(\bx)-\frac{2t}{\beta}\int_{\p\M}R_t(\bx,\by)u(\by)\mathd\tau_\by -t\,r(\bx).
\label{eqn:sol_u}
\end{eqnarray}
Then, we obtain
\begin{eqnarray}
&&  \int_{\p\M} \bar{w}_t(\by)u(\by)\left(v(\by)-u(\by)\right)\mathd\tau_\by\nonumber\\
&=&\int_{\p\M}\frac{\bar{w}_t(\by)}{w_t(\by)}u(\by)\left(\frac{2t}{\beta} \int_{\p\M}R_t(\bx,\by)u(\bx)\mathd\tau_\bx -t\,r(\by)\right)\mathd \tau_\by\nonumber\\
&\le & \frac{C\sqrt{t}}{\beta}\|u\|_{L^2(\p\M)}^2+Ct\|u\|_{L^2(\p\M)}\|r\|_{L^2(\p\M)}\nonumber\\
&\le & \frac{C\sqrt{t}}{\beta}\|u\|_{L^2(\p\M)}^2+Ct\|u\|_{L^2(\p\M)}\|r\|_{H^1(\M)}.\nonumber
\end{eqnarray}
Combining above estimates together, we have
\begin{eqnarray}
&&   \int_\M u(\bx)\left(\int_{\p\M}\bar{R}_t(\bx,\by)u(\by)\tau_\by\right)\mathd\mu_\bx\nonumber\\
&\ge & \int_{\p\M}\bar{w}_t(\by)u^2(\by)\mathd\tau_\by- \frac{C\sqrt{t}}{\beta}\|u\|_{L^2(\p\M)}^2-Ct\|u\|_{L^2(\p\M)}\|r\|_{H^1(\M)}\nonumber\\
&& -C\sqrt{t}\|u\|_{L^2(\M)}\|r\|_{L^2(\M)} -\frac{C\sqrt{t}}{\beta}\left|\int_\M u(\bx)\left(\int_{\p\M}\bar{R}_t(\bx,\by)u(\by)\mathd\tau_\by\right)\mathd\mu_\bx\right|.\nonumber
\end{eqnarray}
We can choose $\frac{\sqrt{t}}{\beta}$ small enough such that $\frac{C\sqrt{t}}{\beta}\le \min\{\frac{1}{2}, \frac{w_{\min}}{6}\}$, which gives us 
\begin{eqnarray}
&&   \int_\M u(\bx)\left(\int_{\p\M}\bar{R}_t(\bx,\by)u(\by)\mathd \tau_\by\right)\mathd\mu_\bx\nonumber\\
&\ge & \frac{2}{3}\int_{\p\M}\bar{w}_t(\by)u^2(\by)\mathd\tau_\by- \frac{C\sqrt{t}}{\beta}\|u\|_{L^2(\p\M)}^2-Ct\|u\|_{L^2(\p\M)}\|r\|_{H^1(\M)}-C\sqrt{t}\|u\|_{L^2(\M)}\|r\|_{L^2(\M)} \nonumber\\
&\ge& \frac{w_{\min}}{2}\|u\|_{L^2(\p\M)}^2-Ct\|u\|_{L^2(\p\M)}\|r\|_{H^1(\M)}-C\sqrt{t}\|u\|_{L^2(\M)}\|r\|_{L^2(\M)} \nonumber\\
&\ge& \frac{w_{\min}}{4}\|u\|_{L^2(\p\M)}^2-Ct^2\|r\|_{H^1(\M)}^2-C\sqrt{t}\|u\|_{L^2(\M)}\|r\|_{L^2(\M)}\nonumber
\end{eqnarray}
Substituting the above estimate to the first inequality \eqref{ineq:l2_dv}, we obtain
\begin{eqnarray}
\label{eq:est_dv_robin}
 &&  \|\nabla v\|_{L^2(\M)}+\frac{w_{\min}}{4\beta}\|u\|_{L^2(\p\M)}^2\\
&\le& -C\int_\M u(\bx)r(\bx)\mathd\mu_\bx+ 
\frac{Ct^2}{\beta}\|r\|_{H^1(\M)}^2+\frac{C\sqrt{t}}{\beta}\|u\|_{L^2(\M)}\|r\|_{L^2(\M)}\nonumber\\
&\le & C\|u\|_{L^2(\M)}\|r\|_{L^2(\M)}+\frac{Ct^2}{\beta}\|r\|_{H^1(\M)}^2.\nonumber
\end{eqnarray}
Here we require that $\frac{\sqrt{t}}{\beta}$ is bounded by a constant independent on $\beta$ and $t$. 
Now, using the representation of $u$ given in \eqref{eqn:sol_u}, we obtain
\begin{eqnarray}
&&  \|\nabla u\|_{L^2(\M)}^2+\frac{w_{\min}}{8\beta}\|u\|_{L^2(\p\M)}^2\nonumber\\
&\le&   C\|\nabla v\|_{L^2(\M)}^2+\frac{Ct^2}{\beta^2}\left\|\nabla \left(\frac{1}{w_t(\bx)}\int_{\p\M}R_t(\bx,\by)u(\by)\mathd\tau_\by\right)\right\|_{L^2(\M)}^2 \nonumber\\
&& +Ct^2\left\|\nabla \left(\frac{r(\bx)}{w_t(\bx)}\right)\right\|_{L^2(\M)}^2+\frac{w_{\min}}{8\beta}\|u\|_{L^2(\p\M)}^2\nonumber\\
&\le & C\|\nabla v\|_{L^2(\M)}^2+\left(\frac{C\sqrt{t}}{\beta^2}+\frac{w_{\min}}{8\beta}\right)\|u\|_{L^2(\p\M)}^2+Ct\|r\|_{L^2(\M)}^2+Ct^2\|r\|_{H^1(\M)}^2\nonumber\\
&\le & C\|\nabla v\|_{L^2(\M)}^2+\frac{w_{\min}}{4\beta}\|u\|_{L^2(\p\M)}^2+Ct\|r\|_{L^2(\M)}^2+Ct^2\|r\|_{H^1(\M)}^2\nonumber\\
&\le& C\|u\|_{L^2(\M)}\|r\|_{L^2(\M)}+Ct\|r\|_{L^2(\M)}^2+\frac{Ct^2}{\beta}\|r\|_{H^1(\M)}^2.\nonumber
\end{eqnarray}
Here we require that $\frac{C\sqrt{t}}{\beta}\le \frac{w_{\min}}{8}$ in the third inequality.
Furthermore, we have
\begin{eqnarray}
&&  \|u\|_{L^2(\M)}^2\le C\left(\|\nabla u\|_{L^2(\M)}^2+\frac{w_{\min}}{8\beta}\|u\|_{L^2(\p\M)}^2\right)\nonumber\\
&\le& C\|u\|_{L^2(\M)}\|r\|_{L^2(\M)}+Ct\|r\|_{L^2(\M)}^2+\frac{Ct^2}{\beta}\|r\|_{H^1(\M)}^2\nonumber\\
&\le & \frac{1}{2}\|u\|_{L^2(\M)}^2+C\|r\|_{L^2(\M)}^2+\frac{Ct^2}{\beta}\|r\|_{H^1(\M)}^2, \nonumber
\end{eqnarray}
which implies that
\begin{eqnarray}
   \|u\|_{L^2(\M)}\le C\left(\|r\|_{L^2(\M)}+\frac{t}{\sqrt{\beta}}\|r\|_{H^1(\M)}\right).\nonumber
\end{eqnarray}

Finally, we obtain 
\begin{eqnarray}
\|\nabla u\|_{L^2(\M)}^2 &\le& C\|u\|_{L^2(\M)}\|r\|_{L^2(\M)}+\frac{Ct^2}{\beta}\|r\|_{H^1(\M)}^2 \nonumber \\
&\le& C\left(\|r\|_{L^2(\M)}+\frac{t}{\sqrt{\beta}}\|r\|_{H^1(\M)}\right)^2, \nonumber
\end{eqnarray}
which completes the proof.

\subsection{Stability of $K_t$ for boundary term (Theorem \ref{thm:regularity_boundary_robin})}

First, we denote
\begin{align*}
r(\bx)&=\int_{\p\M}\mathbf{b}(\by)\cdot(\bx-\by)\rhk  \mathd \tau_\by.
\end{align*}

The key point of the proof is to show that
  \begin{eqnarray}
\label{eq:est_boundary}
   \left|\int_\M u(\bx)r(\bx) \mathd \mu_\bx\right|
	\le C\sqrt{t} \;\|\mathbf{b}\|_{H^1(\M)} \|u\|_{H^1(\M)}.
  \end{eqnarray}

Direct calculation gives that
\begin{eqnarray*}
  &&|2t\nabla\rrhk-(\bx-\by)\bar{R}_t(\bx,\by)|\le C|\bx-\by|^2\rhk,
\end{eqnarray*}
where $\rrhk=C_t\bar{\bar{R}}\left(\frac{\|\bx-\by\|^2}{4t}\right)$ and $\bar{\bar{R}}(r)=\int_{r}^{\infty}\bar{R}(s)\mathd s$.
This implies that
\begin{align}
\label{eq:est_boundary_1}
  &\left|\int_\M u(\bx) \int_{\p\M}\mathbf{b}(\by)\left((\bx-\by)\bar{R}_t(\bx,\by)+2t\nabla\rrhk\right)
 \mathd \tau_\by \mathd \mu_\bx\right|\\
\le & C\int_\M |u(\bx) |\int_{\p\M}|\mathbf{b}(\by)||\bx-\by|^2\rhk  \mathd \tau_\by\mathd \mu_\bx\nonumber\\
\le &Ct\|\mathbf{b}\|_{L^2(\p\M)} \left(\int_{\p\M}\left(\int_\M\rhk  \mathd\mu_\bx\right)
\left(\int_\M |u(\bx)|^2\rhk  \mathd \mu_\bx\right) \mathd \tau_\by\right)^{1/2}\nonumber\\
\le & Ct\|\mathbf{b}\|_{H^1(\M)} \left(\int_{\M} |u(\bx)|^2 
\left(\int_{\p\M} \rhk  \mathd \tau_\by\right)\mathd \mu_\bx\right)^{1/2}\nonumber\\
\le & Ct^{3/4}\|\mathbf{b}\|_{H^1(\M)}\|u\|_{L^2(\M)}.\nonumber
\end{align}
On the other hand, using the Gauss integral formula, we have
 \begin{eqnarray}
\label{eq:gauss_boundary}
&&    \int_\M u(\bx) \int_{\p\M}\mathbf{b}(\by)\cdot\nabla\rrhk  \mathd \tau_\by\mathd \mu_\bx\\
&=& \int_{\p\M} \int_{\M}u(\bx) T_\bx(\mathbf{b}(\by))\cdot\nabla\rrhk  \mathd \mu_\bx \mathd \tau_\by\nonumber\\
&=&\int_{\p\M} \int_{\p\M}\mathbf{n}(\bx)\cdot T_\bx(\mathbf{b}(\by))u(\bx)\rrhk   \mathd \tau_\bx\mathd \tau_\by\nonumber\\
&&-\int_{\p\M} \int_{\M}\text{div}_\bx[u(\bx) T_\bx(\mathbf{b}(\by))]\rrhk  \mathd \mu_\bx\mathd \tau_\by.\nonumber
  \end{eqnarray}
Here $T_\bx$ is the projection operator to the tangent space on $\bx$. To get the first equality, we use the 
fact that $\nabla\rrhk$ belongs to the tangent space on $\bx$, such that $\mathbf{b}(\by)\cdot\nabla\rrhk=T_\bx(\mathbf{b}(\by))\cdot\nabla\rrhk$
and $\bn(\bx)\cdot T_\bx(\mathbf{b}(\by))=\bn(\bx)\cdot \mathbf{b}(\by)$ where $\bn(\bx)$ is the out normal of $\p\M$ at $\bx\in \p\M$.

For the first term, we have
  \begin{align}
\label{eq:est_boundary_2}
&    \left|\int_{\p\M} \int_{\p\M}\mathbf{n}(\bx)\cdot T_\bx(\mathbf{b}(\by))u(\bx)\rrhk   \mathd \tau_\bx\mathd \tau_\by\right|\\
= &\left|\int_{\p\M} \int_{\p\M}\mathbf{n}(\bx)\cdot \mathbf{b}(\by)u(\bx)\rrhk   \mathd \tau_\bx\mathd \tau_\by\right|\nonumber\\
\le &C\|\mathbf{b}\|_{L^2(\p\M)}  \left(\int_{\p\M} \left(\int_{\p\M}|u(\bx)|\rrhk  \mathd \tau_\bx\right)^2
 \mathd \tau_\by\right)^{1/2}\nonumber\\
\le &C\|\mathbf{b}\|_{H^1(\M)}  \left(\int_{\p\M} \left(\int_{\p\M}\rrhk  \mathd \tau_\bx\right)
 \left(\int_{\p\M}|u(\bx)|^2\rrhk  \mathd \tau_\bx\right)
 \mathd \tau_\by\right)^{1/2}\nonumber\\
\le &Ct^{-1/2}\; \|\mathbf{b}\|_{H^1(\M)} \|u\|_{L^2(\p\M)}\le Ct^{-1/2}\; \|\mathbf{b}\|_{H^1(\M)} \|u\|_{H^1(\M)}.\nonumber
  \end{align}
We can also bound the second term on the right hand side of \eqref{eq:gauss_boundary}. By using the assumption that $\M\in C^\infty$, we have
\begin{align*}
&|  \text{div}_\bx[u(\bx) T_\bx(\mathbf{b}(\by))]|\nonumber\\
\le& |\nabla u(\bx)||T_\bx(\mathbf{b}(\by))|| |
+|u(\bx)||\text{div}_\bx[T_\bx(\mathbf{b}(\by))]|| |+|\nabla  ||u(\bx)T_\bx(\mathbf{b}(\by))| \\
\le & C(|\nabla u(\bx)|+|u(\bx)|)|\mathbf{b}(\by)|
\end{align*}
where the constant $C$ depends on the curvature of the manifold $\M$.

Then, we have
  \begin{eqnarray}
\label{eq:est_boundary_3}
&&    \left|\int_{\p\M} \int_{\M}\text{div}_\bx[u(\bx)T_\bx(\mathbf{b}(\by))]\rrhk   \mathd \mu_\bx\mathd \tau_\by\right|\\
&\le &C   \int_{\p\M}\mathbf{b}(\by)  \int_{\M}(|\nabla u(\bx)|+|u(\bx)|)\rrhk  \mathd \mu_\bx\mathd \tau_\by\nonumber\\
&\le &C \|\mathbf{b}\|_{L^2(\p\M)}  \left(\int_{\M} (|\nabla u(\bx)|^2+|u(\bx)|^2) 
\left(\int_{\p\M}\rrhk  \mathd \tau_\by\right)\mathd \mu_\bx\right)^{1/2}\nonumber\\
\quad\quad&\le &C t^{-1/4}\;\|\mathbf{b}\|_{H^1(\M)}  \|u\|_{H^1(\M)}.\nonumber
  \end{eqnarray}
Then, the inequality \eqref{eq:est_boundary} is obtained from \eqref{eq:est_boundary_1},
\eqref{eq:gauss_boundary}, \eqref{eq:est_boundary_2} and \eqref{eq:est_boundary_3}.

Following the proof of Theorem \ref{thm:regularity_robin}, in \eqref{eq:est_Q} and \eqref{eq:est_dv_robin}, we bound 
$\left|\int_{\M}u(\bx)r(\bx)\mathd\bx\right|$ by $C\sqrt{t}\;\|\mathbf{b}\|_{H^1(\M)}\|u\|_{H^1(\M)}$, which implies that
\begin{eqnarray}
&&  \|\nabla u\|_{L^2(\M)}^2+\frac{w_{\min}}{8\beta}\|u\|_{L^2(\p\M)}^2\nonumber\\
&\le& C\sqrt{t}\;\|\mathbf{b}\|_{H^1(\M)}\|u\|_{H^1(\M)} +Ct\|r\|_{L^2(\M)}^2+\frac{Ct^2}{\beta}\|r\|_{H^1(\M)}^2\nonumber\\
&\le &C\|\mathbf{b}\|_{H^1(\M)}\left(\sqrt{t}\|u\|_{H^1(\M)} +t\right)\nonumber
\end{eqnarray}
where we use the estimates that
\begin{eqnarray*}
  \|r(\bx)\|_{L^2(\M)}&\le& Ct^{1/4}\|\mathbf{b}\|_{H^1(\M)},\\
  \|r(\bx)\|_{H^1(\M)}&\le& Ct^{-1/4}\|\mathbf{b}\|_{H^1(\M)}.
\end{eqnarray*}
Then, using the fact that 
\begin{eqnarray}
 \|u\|_{L^2(\M)}^2\le C\left(\|\nabla u\|_{L^2(\M)}^2+\frac{w_{\min}}{8\beta}\|u\|_{L^2(\p\M)}^2\right),\nonumber
\end{eqnarray}
we have
  \begin{eqnarray}
\|u\|_{H^1(\M)}^2\le C\|\mathbf{b}\|_{H^1(\M)}\left(\sqrt{t}\|u\|_{H^1(\M)} +t\right),
\nonumber
\end{eqnarray}
which completes the proof.
\section{Error analysis of the discretization (Theorem \ref{thm:dis_error_robin})}
In this section, we estimate the discretization error introduced by approximating the integrals in
\eqref{eq:integral-homo}, that is to prove Theorem \ref{thm:dis_error_robin}.
To simplify the notation, we introduce two intermediate operators defined as follows,
\begin{align}
L_{t, h}u(\bx) =& \invt\sum_{\bfp_j \in P} \hkxpj(u(\bx) - u(\bfp_j))V_j,
\label{eqn:laplace_dis} 
\\
K_{t, h}u(\bx) =& \invt\sum_{\bfp_j \in P} \hkxpj(u(\bx) - u(\bfp_j))V_j+\frac{2}{\beta}\sum_{\bs_j\in S}\rhkxsj u(\bfs_j)A_j. 
\label{eqn:laplace_dis_dirichlet}
\end{align}
If  $u_{t,h}=I_{\mathbf{f}}(\mathbf{u})$ with $\mathbf{u}$ satisfying Equation \eqref{eqn:dis-homo}. 
One can verify that the following two equations are satisfied,
\begin{eqnarray}
K_{t, h}u_{t,h}(\bx) &=&  \sum_{\bfp_j \in P} \rhkxpj f(\bfp_j)V_j.
\label{eqn:integral_dis_interp_dirichlet}
\end{eqnarray}

The following lemma is needed for proving Theorem \ref{thm:dis_error_robin}. Its proof is deferred to appendix.
\begin{lemma} Suppose $\mathbf{u}=(u_1,\cdots,u_n)^t$ satisfies equation \eqref{eqn:dis-homo}, there exist constants $C, T_0, r_0$ 
only depend on $\M$ and $\p\M$, such that
\begin{eqnarray}
  \left(\sum_{i=1}^{n}u_i^2V_i\right)^{1/2}+t^{1/4}\left(\sum_{l\in I_S}u_l^2A_l\right)^{1/2}
\le  C\|I_{\mathbf{f}}(\bfu)\|_{H^1(\M)}+C\sqrt{h}\,t^{3/4}\|f\|_{\infty},\nonumber
\end{eqnarray}
as long as $t\le T_0, \frac{\sqrt{t}}{\beta}\le r_0, \frac{h}{t^{3/2}}\le r_0$, $I_S=\{1\le l\le n: \bfp_l\in S\}$.
\label{lem:diff_dis}
\end{lemma}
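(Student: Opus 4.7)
The plan is to run the continuous stability argument of Theorem~\ref{thm:regularity_robin} at the discrete level and then pass back to a continuous norm via the defining identity $u_i=I_{\bff}(\bfu)(\bfp_i)$. To begin, I would pair equation~\eqref{eqn:dis_dirichlet} (with $b=0$) with the test values $u_i V_i$ and sum over $i$, producing the discrete energy identity
\begin{equation*}
\tfrac{1}{2t}\sum_{i,j} R_t(\bfp_i,\bfp_j)(u_i-u_j)^2 V_i V_j + \tfrac{2}{\beta}\sum_{i,j} \bar{R}_t(\bfp_i,\bfs_j) u_i u_j V_i A_j = -\sum_{i,j} \bar{R}_t(\bfp_i,\bfp_j) u_i f_j V_i V_j.
\end{equation*}
The quadratic difference term is the discrete Dirichlet energy and, via the discrete analogue of Theorem~\ref{thm:elliptic_v}, controls $\|\nabla v_{t,h}\|_{L^2(\M)}^2$ where $v_{t,h}(\bx)=\frac{1}{w_{t,h}(\bx)}\sum_j R_t(\bx,\bfp_j)u_j V_j$ and $w_{t,h}(\bx)=\sum_j R_t(\bx,\bfp_j)V_j$.

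Next, I would treat the boundary cross term exactly as in the proof of Theorem~\ref{thm:regularity_robin}: replace $\bar R_t(\bfp_i,\bfs_j)$ by $\frac{\bar w_t(\bfs_j)}{w_t(\bfs_j)} R_t(\bfp_i,\bfs_j)$ at the expense of a remainder whose $L^2$ mass is controlled by the kernel $Q(\bx,\bz)=\int_{\p\M}R_t(\by,\bz)\bar R_t(\bx,\by)\mathd\tau_\by$, and then rewrite the dominant part by using the discrete Laplace action at the boundary points. This extracts the positive contribution $\frac{1}{\beta}\sum_{l\in I_S}\bar w_t(\bfs_l) u_l^2 A_l\gtrsim \frac{1}{\beta}\sum_{l\in I_S} u_l^2 A_l$, with the cross terms absorbed into the Dirichlet energy under the smallness condition $\sqrt{t}/\beta\le r_0$.

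The crucial bridge to the interpolant comes from reading off the formula~\eqref{eqn:interp_dirichlet}:
\begin{equation*}
v_{t,h}(\bx)=I_{\bff}(\bfu)(\bx)+\tfrac{2t}{\beta\,w_{t,h}(\bx)}\sum_j\bar R_t(\bx,\bfs_j) u_j A_j +\tfrac{t}{w_{t,h}(\bx)}\sum_j\bar R_t(\bx,\bfp_j)f_j V_j.
\end{equation*}
Differentiating this decomposition and using that kernel derivatives lose a factor of $1/\sqrt{t}$, the $L^2$ norms of $v_{t,h}$ and $\nabla v_{t,h}$ differ from those of $I_{\bff}(\bfu)$ and $\nabla I_{\bff}(\bfu)$ only by terms of the form $\frac{\sqrt{t}}{\beta}\bigl(\sum_{l\in I_S} u_l^2 A_l\bigr)^{1/2}$ and $\sqrt{t}\,\|f\|_\infty$. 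Combining this with the Poincar\'e--trace inequality $\|v_{t,h}\|_{L^2(\M)}^2\le C\|\nabla v_{t,h}\|_{L^2(\M)}^2+\frac{C}{\beta}\|v_{t,h}\|_{L^2(\p\M)}^2$, and invoking the $h$-integrable approximation to pass from $\sum_i u_i^2 V_i=\sum_i I_{\bff}(\bfu)(\bfp_i)^2 V_i$ back to $\|I_{\bff}(\bfu)\|_{L^2(\M)}^2$ (plus a controlled discretization remainder), yields the stated bound after taking square roots.

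The main obstacle is the careful bookkeeping of the discretization errors produced by the $h$-integrable approximation when it is applied to integrals of products of sharply peaked kernels: each such replacement costs a factor scaling like $h/\sqrt{t}$, and these errors must be summed so as to recover exactly the remainder $\sqrt{h}\,t^{3/4}\|f\|_\infty$ without spoiling the leading $H^1$ term. The $t^{1/4}$ weighting on the boundary contribution reflects the discrete trace step, and preserving its sharpness across the chain of estimates, while simultaneously absorbing all $\sqrt{t}/\beta$ and $h/t^{3/2}$ perturbations under the hypotheses $\sqrt{t}/\beta\le r_0$ and $h/t^{3/2}\le r_0$, is the other delicate point.
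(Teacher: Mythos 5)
Your proposal takes a genuinely different route from the paper, and unfortunately it does not prove the lemma as stated. The paper's proof of Lemma~\ref{lem:diff_dis} is a pure quadrature-error argument with no energy estimate at all: since $\bfu$ satisfies \eqref{eqn:dis_dirichlet}, one has the exact interpolation identity $u_i=I_{\bff}(\bfu)(\bfp_i)$, so $\sum_i u_i^2 V_i$ is the $h$-integrable quadrature of $\|I_{\bff}(\bfu)\|_{L^2(\M)}^2$ and $\sum_{l\in I_S}u_l^2 A_l$ is that of $\|I_{\bff}(\bfu)\|_{L^2(\p\M)}^2$; the lemma is then obtained from the decomposition $I_{\bff}(\bfu)=v_1+v_2+v_3$, quadrature error bounds on each $\int_\M v_m v_n-\sum_j v_m(\bfp_j)v_n(\bfp_j)V_j$ (and the boundary analogues), and the trace theorem $\|I_{\bff}(\bfu)\|^2_{L^2(\p\M)}\lesssim t^{-1/2}\|I_{\bff}(\bfu)\|^2_{H^1(\M)}$, with absorption under $h/\sqrt{t}\le r_0$.

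There are two concrete problems with the proposed route. First, the discrete energy identity you write down produces a bound for $\sum_i u_i^2 V_i+\beta^{-1}\sum_l u_l^2 A_l$ in terms of $\|\bfu\|_{\ell^2_V}\cdot\|f\|_\infty$ (the right-hand side of the identity is $-\sum_{i,j}\bar R_t(\bfp_i,\bfp_j)u_i f_j V_i V_j$), so after absorbing you would arrive at a discrete a priori estimate of the shape $\|\bfu\|_{\ell^2_V}\lesssim\|f\|_\infty$. That is a different statement from the lemma, whose right-hand side is $C\|I_{\bff}(\bfu)\|_{H^1(\M)}+C\sqrt{h}\,t^{3/4}\|f\|_\infty$; the $\|I_{\bff}(\bfu)\|_{H^1(\M)}$ dependence is essential because the lemma is used in the proof of Theorem~\ref{thm:dis_error_robin} precisely to close a bootstrap, not as a standalone bound. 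Your final sentence about ``passing from $\sum_i u_i^2 V_i$ back to $\|I_{\bff}(\bfu)\|_{L^2(\M)}^2$'' does not repair this: pairing with the energy bound still leaves $\|f\|_\infty$ as the main term rather than $\|I_{\bff}(\bfu)\|_{H^1}$.

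Second, the one step in your proposal that \emph{is} actually the content of the lemma --- the quadrature comparison $\sum_i u_i^2 V_i\approx\|I_{\bff}(\bfu)\|_{L^2(\M)}^2$ plus ``a controlled discretization remainder'' --- is treated as an afterthought, but it is exactly where the work lies and where a naive argument fails. Applying the $h$-integrable assumption directly to $f=I_{\bff}(\bfu)^2$ costs $h\|I_{\bff}(\bfu)^2\|_{C^1(\M)}$, which involves $\|\nabla I_{\bff}(\bfu)\|_\infty\sim t^{-1/2}\max_i|u_i|$ and is not controlled by the $H^1$ norm. The paper avoids this by never applying the quadrature assumption to $I_{\bff}(\bfu)^2$ itself: because each $v_m$ is a kernel sum with coefficients $u_i$ or $f_j$, the quadrature error in $\int_\M v_m v_n-\sum_j v_m(\bfp_j)v_n(\bfp_j)V_j$ can be re-expressed as a double sum $\sum_{i,l}C_t u_i u_l V_i V_l A_{il}$ with $|A_{il}|\le Ch t^{-1/2}R(|\bfp_i-\bfp_l|^2/32t)$, which is then bounded by $(Ch/\sqrt t)\sum_i u_i^2 V_i$ and absorbed. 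This kernel-structure trick, together with the corresponding boundary version yielding the factor $t^{1/4}$, is what gives the remainder $C\sqrt h\,t^{3/4}\|f\|_\infty$; nothing in the proposed energy argument or the invocation of a ``discrete analogue of Theorem~\ref{thm:elliptic_v}'' (which is itself unproven) supplies it.
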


\begin{proof} {\it of Theorem \ref{thm:dis_error_robin}}

Denote
\begin{align}
u_{t,h}(\bx)=I_{\mathbf{f}}(\bfu)=\frac{1}{w_{t,h}(\bx)}\left(\sum_{\bfp_j\in P}R_t(\bx,\bfp_j)u_jV_j-\frac{2t}{\beta}
\sum_{\bs_j\in S}\bar{R}_t(\bx,\bfs_j)u_jA_j
+t\sum_{\bfp_j\in P}\bar{R}_t(\bx,\bfp_j)f_jV_j\right),\nonumber
\end{align}
where $\bfu=(u_1,\cdots,u_N)^t$ solves Equation \eqref{eqn:dis_dirichlet}, $f_j = f(\bfp_j)$ and $w_{t,h}(\bx)=\sum_{\bfp_j\in P}R_t(\bx,\bfp_j)V_j$.  
For convenience, we set
\begin{align*}
  a_{t,h}(\bx)=&\frac{1}{w_{t,h}(\bx)}\sum_{\bfp_j\in P}R_t(\bx,\bfp_j)u_jV_j, \\
c_{t,h}(\bx)=&\frac{t}{w_{t,h}(\bx)}\sum_{\bfp_j\in P}\bar{R}_t(\bx,\bfp_j)f(\bfp_j)V_j,\\
d_{t,h}(\bx)=&-\frac{2t}{\beta w_{t,h}(\bx)}\sum_{\bfs_j\in S}\bar{R}_t(\bx,\bfs_j)u_jA_j.  
\end{align*}

Next we upper bound the approximation error $K_t(u_{t,h}) - K_{t,h}(u_{t, h})$.
Since $u_{t,h} = a_{t, h} +c_{t, h}+d_{t,h}$, we only need to upper bound the approximation error
for $a_{t, h}, c_{t, h}$ and $d_{t,h}$ separately.  
For $c_{t,h}$, 
\begin{eqnarray}
&&\left|\left(K_tc_{t, h} - K_{t,h}c_{t, h}\right)(\bx)\right|\nonumber \\
&\le &\frac{1}{t} \left|c_{t,h}(\bx)\right|\left|\int_{\mathcal{M}}R_t(\bx,\by) \mathd \mu_\by-\sum_{\bfp_j\in P}R_t(\bx,\bfp_j)V_j\right|\nonumber\\
&&+ \frac{1}{t}\left|\int_{\mathcal{M}}R_t(\bx,\by)c_{t,h}(\by)  \mathd \mu_\by-\sum_{\bfp_j\in P}R_t(\bx,\bfp_j)c_{t,h}(\bfp_j)V_j\right|\nonumber\\
&&+\frac{2}{\beta}\left|\int_{\p\M}\bar{R}_t(\bx,\by)c_{t,h}(\by)\mathd \tau_\by-\sum_{\bfs_j\in S}\bar{R}_t(\bx,\bfs_j)c_{t,h}(\bfs_j)A_j\right|\nonumber\\
&\le &\frac{Ch}{t^{3/2}}\left|c_{t,h}(\bx)\right|+ \frac{Ch}{t^{3/2}}\|c_{t,h}\|_{\infty}+\frac{Ch}{t}\|\nabla c_{t,h}\|_{\infty}+\frac{Ch}{\beta}\left(t^{-1}\|c_{t,h}\|_\infty
+t^{-1/2}\|\nabla c_{t,h}\|_\infty\right)\nonumber\\
&\le &\frac{Ch}{\sqrt{t}}\left(1+\frac{\sqrt{t}}{\beta}\right)\|f\|_{\infty}.\nonumber
\end{eqnarray}
Now we upper bound $\|K_ta_{t, h} - K_{t,h}a_{t, h}\|_{L_2(\M)}$. First, we have 
\begin{eqnarray}
\label{eqn:a_1}
&&  \int_{\mathcal{M}}\left(a_{t,h}(\bx)\right)^2\left|\int_{\mathcal{M}}R_t(\bx,\by) \mathd \mu_\by-\sum_{\bfp_j\in P}R_t(\bx,\bfp_j)V_j\right|^2\mathd\mu_\bx\\
&\le & \frac{Ch^2}{t} \int_{\mathcal{M}} \left( \frac{1}{w_{t, h}(\bx)} \sum_{\bfp_j\in P}R_t(\bx,\bfp_j)u_jV_j \right)^2 \mathd\mu_\bx \nonumber \\
&\le & \frac{Ch^2}{t} \int_{\mathcal{M}} \left( \sum_{\bfp_j\in P}R_t(\bx,\bfp_j)u_j^2V_j  \right) \left( \sum_{\bfp_j\in P}R_t(\bx,\bfp_j)V_j  \right) \mathd\mu_\bx \nonumber \\
&\le & \frac{Ch^2}{t} \left( \sum_{\bfp_j\in P}u_j^2V_j \int_{\mathcal{M}}R_t(\bx,\bfp_j)  \mathd\mu_\bx    \right) 
\le  \frac{Ch^2}{t}\sum_{\bfp_j\in P}u_j^2V_j.\nonumber
\end{eqnarray}
Let 
\begin{eqnarray}
K_1 &=&  C_t\int_{\mathcal{M}}\frac{1}{w_{t, h}(\by)}R\left(\frac{|\bx-\by|^2}{4t}\right)R\left(\frac{|\bfp_i-\by|^2}{4t}\right) \mathd \mu_\by\nonumber\\
 &-&C_t\sum_{\bfp_j\in P} \frac{1}{w_{t, h}(\bfp_j)}R\left(\frac{|\bx-\bfp_j|^2}{4t}\right)R\left(\frac{|\bfp_i-\bfp_j|^2}{4t}\right)V_j. 
\nonumber
\end{eqnarray}
We have $|K_1|<\frac{Ch}{t^{1/2}}$ for some constant $C$ independent of $t$. In addition, notice that
only when $|\bx-\bfp_i|^2\leq 16t $ is $K_1\neq 0$, which implies 
\begin{eqnarray}
|K_1| \leq \frac{1}{\delta_0}|K_1|R\left(\frac{|\bx-\bfp_i|^2}{32t}\right). \nonumber
\end{eqnarray}
Then we have
\begin{eqnarray}
\label{eqn:a_2}
&&\int_{\mathcal{M}}\left|\int_{\mathcal{M}}R_t(\bx,\by)a_{t,h}(\by)  \mathd \mu_\by-\sum_{\bfp_j\in P}R_t(\bx,\bfp_j)a_{t,h}(\bfp_j)V_j\right|^2\mathd\mu_\bx\\
&=& \int_{\mathcal{M}}\left(\sum_{i=1}^nC_tu_iV_i K_1 \right)^2\mathd\mu_\bx\nonumber\\
&\le &\frac{Ch^2}{t} \int_{\mathcal{M}}\left(\sum_{i=1}^n C_t|u_i|V_i R\left(\frac{|\bx-\bfp_i|^2}{32t}\right)  \right)^2 \mathd\mu_\bx \nonumber\\
&\le &\frac{Ch^2}{t} \int_{\mathcal{M}} \left(\sum_{i=1}^n C_t R\left(\frac{|\bx-\bfp_i|^2}{32t}\right) u^2_iV_i\right)  
\left(\sum_{i=1}^n C_t R\left(\frac{|\bx-\bfp_i|^2}{32t} \right)V_i  \right) \mathd\mu_\bx \nonumber\\
&\le &\frac{Ch^2}{t} \sum_{i=1}^n \left(\int_{\mathcal{M}} C_t R\left(\frac{|\bx-\bfp_i|^2}{32t}\right)  \mathd\mu_\bx \left(u^2_iV_i\right)\right)  
\le \frac{Ch^2}{t} \left(\sum_{i=1}^nu_i^2V_i\right). 
\nonumber
\end{eqnarray} 

Let 
\begin{eqnarray}
K_2 &=&  C_t\int_{\p\mathcal{M}}\frac{1}{w_{t, h}(\by)}\bar{R}\left(\frac{|\bx-\by|^2}{4t}\right)R\left(\frac{|\bfp_i-\by|^2}{4t}\right) \mathd \tau_\by\nonumber\\
 &-&C_t\sum_{\bfs_j\in S} \frac{1}{w_{t, h}(\bfs_j)}\bar{R}\left(\frac{|\bx-\bfs_j|^2}{4t}\right)R\left(\frac{|\bfp_i-\bfs_j|^2}{4t}\right)A_j.\nonumber 
\end{eqnarray}
We have $|K_2|<\frac{Ch}{t}$ for some constant $C$ independent of $t$. In addition, notice that
only when $|\bx-\bfp_i|^2\leq 16t $ is $K_2\neq 0$, which implies 
\begin{eqnarray}
|K_2| \leq \frac{1}{\delta_0}|K_2|R\left(\frac{|\bx-\bfp_i|^2}{32t}\right). \nonumber
\end{eqnarray}
Then
\begin{eqnarray}
\label{eqn:a_2_boundary}
&&\int_{\mathcal{M}}\left|\int_{\p\mathcal{M}}\bar{R}_t(\bx,\by)a_{t,h}(\by)  \mathd \tau_\by-\sum_{\bfs_j\in S}\bar{R}_t(\bx,\bfs_j)a_{t,h}(\bfs_j)A_j\right|^2\mathd\mu_\bx\\
&=& \int_{\mathcal{M}}\left(\sum_{i=1}^nC_tu_iV_i K_2 \right)^2\mathd\mu_\bx\nonumber\\
&\le &\frac{Ch^2}{t^2} \int_{\mathcal{M}}\left(\sum_{i=1}^n C_t|u_i|V_i R\left(\frac{|\bx-\bfp_i|^2}{32t}\right)  \right)^2 \mathd\mu_\bx \nonumber\\
&\le &\frac{Ch^2}{t^2} \int_{\mathcal{M}} \left(\sum_{i=1}^n C_t R\left(\frac{|\bx-\bfp_i|^2}{32t}\right) u^2_iV_i\right)  
\left(\sum_{i=1}^n C_t R\left(\frac{|\bx-\bfp_i|^2}{32t} \right)V_i  \right) \mathd\mu_\bx \nonumber\\
&\le &\frac{Ch^2}{t^2} \sum_{i=1}^n \left(\int_{\mathcal{M}} C_t R\left(\frac{|\bx-\bfp_i|^2}{32t}\right)  \mathd\mu_\bx \left(u^2_iV_i\right)\right)  \le
 \frac{Ch^2}{t^2} \left(\sum_{i=1}^nu_i^2V_i\right). \nonumber
\end{eqnarray} 

Combining Equation~\eqref{eqn:a_1},~\eqref{eqn:a_2} and ~\eqref{eqn:a_2_boundary}, 
\begin{eqnarray}
&&\|K_ta_{t, h} - K_{t,h}a_{t, h}\|_{L^2(\M)}
\le  \frac{Ch}{t^{3/2}}\left(1+\frac{\sqrt{t}}{\beta}\right) \left(\sum_{i=1}^nu_i^2V_i\right)^{1/2}\nonumber
\end{eqnarray}

Now we upper bound $\|K_td_{t, h} - K_{t,h}d_{t, h}\|_{L_2}$. We have 
\begin{eqnarray}
\label{eqn:d_1}
&&  \int_{\mathcal{M}}\left(d_{t,h}(\bx)\right)^2\left|\int_{\mathcal{M}}\bar{R}_t(\bx,\by) \mathd \tau_\by-\sum_{\bfp_j\in P}\bar{R}_t(\bx,\bfp_j)V_j\right|^2\mathd\mu_\bx\\
&\le & \frac{Ch^2}{t^2}\int_{\mathcal{M}}\left(d_{t,h}(\bx)\right)^2\mathd\mu_\bx \nonumber\\
&\le & \frac{Ch^2t}{\beta^2} \int_{\mathcal{M}} \left( \frac{1}{w_{t, h}(\bx)} \sum_{\bfs_j\in S}\bar{R}_t(\bx,\bfs_j)u_jA_j \right)^2 \mathd\mu_\bx \nonumber \\
&\le & \frac{Ch^2t}{\beta^2} \int_{\mathcal{M}} \left( \sum_{\bfs_j\in S}\bar{R}_t(\bx,\bfs_j)u_j^2A_j  \right) 
\left( \sum_{\bfs_j\in S}\bar{R}_t(\bx,\bfs_j)A_j  \right) \mathd\mu_\bx \nonumber \\
&\le & \frac{Ch^2\sqrt{t}}{\beta^2} \left( \sum_{j\in I_S}u_j^2A_j \int_{\mathcal{M}}\bar{R}_t(\bx,\bfp_j)  \mathd\mu_\bx    \right) 
\le  \frac{Ch^2\sqrt{t}}{\beta^2}\sum_{j\in I_S}u_j^2A_j.\nonumber
\end{eqnarray}
where $I_S=\{1\le l\le n: \bfp_l\in S\}$.

Let 
\begin{eqnarray}
K_3 &=&  C_t\int_{\mathcal{M}}\frac{1}{w_{t, h}(\by)}R\left(\frac{|\bx-\by|^2}{4t}\right)\bar{R}\left(\frac{|\bfp_i-\by|^2}{4t}\right) \mathd \mu_\by\nonumber\\
 &-&C_t\sum_{\bfp_j\in P} \frac{1}{w_{t, h}(\bfp_j)}R\left(\frac{|\bx-\bfp_j|^2}{4t}\right)\bar{R}\left(\frac{|\bfp_i-\bfp_j|^2}{4t}\right)V_j. \nonumber
\end{eqnarray}
We have $|K_3|<\frac{Ch}{t^{1/2}}$ for some constant $K_3$ independent of $t$. In addition, notice that
only when $|\bx-\bfp_i|^2\leq 16t $ is $K_3\neq 0$, which implies 
\begin{eqnarray}
|K_3| \leq \frac{1}{\delta_0}|C|R\left(\frac{|\bx-\bfp_i|^2}{4t}\right). \nonumber
\end{eqnarray}
Then we have
\begin{eqnarray}
\label{eqn:d_2}
&&\int_{\mathcal{M}}\left|\int_{\mathcal{M}}R_t(\bx,\by)d_{t,h}(\by)  \mathd \mu_\by-\sum_{\bfp_j\in P}R_t(\bx,\bfp_j)d_{t,h}(\bfp_j)V_j\right|^2\mathd\mu_\bx\\
&=&\frac{4t^2}{\beta^2} \int_{\mathcal{M}}\left(\sum_{i\in I_S}C_tu_iA_i K_3 \right)^2\mathd\mu_\bx\nonumber\\
&\le &\frac{Ch^2t}{\beta^2} \int_{\mathcal{M}}\left(\sum_{i\in I_S} C_t|u_i|A_i R\left(\frac{|\bx-\bfp_i|^2}{32t}\right)  \right)^2 \mathd\mu_\bx \nonumber\\
&\le &\frac{Ch^2t}{\beta^2} \int_{\mathcal{M}} \left(\sum_{i\in I_S} C_t R\left(\frac{|\bx-\bfp_i|^2}{32t}\right) u^2_iA_i\right)  
\left(\sum_{i\in I_S} C_t R\left(\frac{|\bx-\bfp_i|^2}{32t} \right)A_i  \right) \mathd\mu_\bx \nonumber\\
&\le &\frac{Ch^2\sqrt{t}}{\beta^2} \sum_{i\in I_S} \left(\int_{\mathcal{M}} C_t R\left(\frac{|\bx-\bfp_i|^2}{32t}\right)  \mathd\mu_\bx \left(u^2_iA_i\right)\right)  \le
 \frac{Ch^2\sqrt{t}}{\beta^2} \left(\sum_{i\in I_S}u_i^2A_i\right). \nonumber
\end{eqnarray} 

Let 
\begin{eqnarray}
K_4 &=&  C_t\int_{\p\mathcal{M}}\frac{1}{w_{t, h}(\by)}\bar{R}\left(\frac{|\bx-\by|^2}{4t}\right)\bar{R}\left(\frac{|\bfp_i-\by|^2}{4t}\right) \mathd \tau_\by\nonumber\\
 &-&C_t\sum_{\bfs_j\in S} \frac{1}{w_{t, h}(\bfs_j)}\bar{R}\left(\frac{|\bx-\bfs_j|^2}{4t}\right)\bar{R}\left(\frac{|\bfp_i-\bfs_j|^2}{4t}\right)A_j. \nonumber
\end{eqnarray}
We have $|K_4|<\frac{Ch}{t}$ for some constant $C$ independent of $t$. In addition, notice that
only when $|\bx-\bfp_i|^2\leq 16t $ is $K_4\neq 0$, which implies 
\begin{eqnarray}
|K_4| \leq \frac{1}{\delta_0}|K_4|R\left(\frac{|\bx-\bfp_i|^2}{32t}\right). \nonumber
\end{eqnarray}
and
\begin{eqnarray}
\label{eqn:d_2_boundary}
&&\int_{\mathcal{M}}\left|\int_{\p\mathcal{M}}\bar{R}_t(\bx,\by)d_{t,h}(\by)  \mathd \tau_\by-\sum_{j}\bar{R}_t(\bx,\bfp_j)d_{t,h}(\bfp_j)A_j\right|^2\mathd\mu_\bx\\
&=& \frac{4t^2}{\beta^2}\int_{\mathcal{M}}\left(\sum_{i\in I_S}C_tu_iA_i K_4 \right)^2\mathd\mu_\bx\nonumber\\
&\le &\frac{Ch^2}{\beta^2} \int_{\mathcal{M}}\left(\sum_{i\in I_S} C_t|u_i|A_i R\left(\frac{|\bx-\bfp_i|^2}{32t}\right)  \right)^2 \mathd\mu_\bx \nonumber\\
&\le &\frac{Ch^2}{\beta^2} \int_{\mathcal{M}} \left(\sum_{i\in I_S} C_t R\left(\frac{|\bx-\bfp_i|^2}{32t}\right) u^2_iA_i\right)  
\left(\sum_{i\in I_S} C_t R\left(\frac{|\bx-\bfp_i|^2}{32t} \right)A_i  \right) \mathd\mu_\bx \nonumber\\
&\le &\frac{Ch^2}{\beta^{2}\sqrt{t}} \sum_{i\in I_S} \left(\int_{\mathcal{M}} C_t R\left(\frac{|\bx-\bfp_i|^2}{32t}\right)  \mathd\mu_\bx \left(u^2_iA_i\right)\right)  \le
 \frac{Ch^2}{\beta^{2}\sqrt{t}} \left(\sum_{i\in I_S}u_i^2A_i\right). 
\nonumber
\end{eqnarray} 

Combining Equation~\eqref{eqn:d_1},~\eqref{eqn:d_2} and ~\eqref{eqn:d_2_boundary}, 
\begin{eqnarray}
&&\|K_td_{t, h} - K_{t,h}d_{t, h}\|_{L^2(\M)} 
\le
       \frac{Ch}{\beta t^{3/4}}\left(1+\frac{\sqrt{t}}{\beta}\right) \left(\sum_{i\in I_S}u_i^2A_i\right)^{1/2}\nonumber
\end{eqnarray}

Now assembling the parts together, we have the following upper bound.
\begin{eqnarray}
\label{eqn:elliptic_L_1}
&&\|K_tu_{t, h} - K_{t,h}u_{t, h}\|_{L^2(\M)} \\
&\le& \frac{Ch}{t^{3/2}}\left(\|g\|_{\infty} + t\|f\|_\infty+\left(\sum_{i=1}^n u_i^2V_i\right)^{1/2}+t^{1/4}\left(\sum_{l\in I_S} u_l^2A_l\right)^{1/2}\right).
\nonumber
\end{eqnarray}
At the same time, since $u_t$ and $u_{t,h}$ solve ~\eqref{eqn:integral_dirichlet} 
and \eqref{eqn:dis_dirichlet} respectively, 
we have 
\begin{eqnarray}
\label{eqn:elliptic_L_2}
&&\|K_t(u_{t}) - K_{t,h}(u_{t, h})\|_{L^2(\M)}  \\ 
&=&\left(\int_\M \left(\left(K_tu_{t} - K_{t,h}u_{t, h}\right)(\bx)\right)^2     \mathd\mu_\bx\right)^{1/2}  \nonumber \\
&\leq& 
    \left( \int _\M \left( \int_{\mathcal{M}}\bar{R}_t(\bx,\by)f(\by) - \sum_{\bfp_j\in P}\bar{R}_t(\bx,\bfp_j)f(\bfp_j)V_j\right)^2 \mathd\mu_\bx\right)^{1/2} \nonumber \\
&\le& \frac{Ch}{t^{1/2}}\|f\|_\infty. \nonumber
\end{eqnarray}
From Equation~\eqref{eqn:elliptic_L_1} and~\eqref{eqn:elliptic_L_2}, we get
 \begin{equation}
\label{eqn:elliptic_K}
    \|K_tu_{t} - L_{t}u_{t, h}\|_{L^2(\M)}\le  \frac{Ch}{t^{3/2}}\left(\left(\sum_{i=1}^n u_i^2V_i\right)^{1/2}+t^{1/4}\left(\sum_{l\in I_S} u_l^2A_l\right)^{1/2}+t\|f\|_{\infty}\right).
  \end{equation}

Using the similar techniques, we can get the upper bound of $\|\nabla (K_tu_{t} - L_{t}u_{t, h})\|_{L_2(\M)}$ as following. 
\begin{equation}
\|\nabla \left(K_tu_{t} - L_{t}u_{t, h}\right)\|_{L^2(\M)}
\le \frac{Ch}{t^2}\left(t\|f\|_{C^1(\M)}+\left(\sum_{i=1}^{n} u_i^2V_i\right)^{1/2}+t^{1/4}\left(\sum_{l\in I_S} u_l^2A_l\right)^{1/2}\right).
\label{eqn:elliptic_dK}
\end{equation}

In the remaining of the proof, we only need to get a prior estimate of $\left(\sum_{i=1}^n u_i^2V_i\right)^{1/2}+t^{1/4}\left(\sum_{l\in I_S} u_l^2A_l\right)^{1/2}$.
First, using the estimate \eqref{eqn:elliptic_K} and \eqref{eqn:elliptic_dK} and the Theorem \ref{thm:regularity_robin}, we have
 \begin{eqnarray}
    \|u_{t,h}\|_{H^1(\M)}&\le&  \frac{Ch}{t^{3/2}}\left(\left(\sum_{i=1}^n u_i^2V_i\right)^{1/2}+t^{1/4}\left(\sum_{l\in I_S} u_l^2A_l\right)^{1/2}+t\|f\|_\infty\right)\nonumber\\
&&+C\|K_t u_t\|_{L^2(\M)}+Ct^{3/4}\|K_t u_t\|_{H^1(\M)}.
\label{eqn:uh_h1_comp}
  \end{eqnarray}
Using the relation that $K_t u_t=-\int_{\M} \bar{R}_t(\bx,\by)f(\by)\mu_\by$, it is easy to get that
\begin{eqnarray}
  \|K_t u_t\|_{L^2(\M)}&\le&C\|f\|_\infty,\\
  \|\nabla (K_t u_t)\|_{L^2(\M)}&\le&\frac{C}{t^{1/2}}\|f\|_\infty.
\end{eqnarray}
Substituting above estimates in \eqref{eqn:uh_h1_comp}, we have
  \begin{eqnarray}
    \|u_{t,h}\|_{H^1(\M)}\le  \frac{Ch}{t^{3/2}}\left(\left(\sum_{i=1}^n u_i^2V_i\right)^{1/2}+t^{1/4}\left(\sum_{l\in I_S} u_l^2A_l\right)^{1/2}+t\|f\|_\infty\right)
+C\|f\|_\infty.\nonumber
  \end{eqnarray}

Using Lemma \ref{lem:diff_dis}, we have
\begin{eqnarray}
&&  \left(\sum_{i=1}^nu_i^2V_i\right)^{1/2}+t^{1/4}\left(\sum_{l\in I_S}u_l^2A_l\right)^{1/2}\nonumber\\
&\le&  C\|u_{t,h}\|_{H^1(\M)}+C\sqrt{h}\left(t^{3/4}\|f\|_{\infty}+\|g\|_{\infty}\right)\nonumber\\
&\le&  \frac{Ch}{t^{3/2}}\left( t\|f\|_\infty+\left(\sum_{i=1}^n u_i^2V_i\right)^{1/2}+t^{1/4}\left(\sum_{l\in I_S} u_l^2A_l\right)^{1/2}\right)\nonumber\\
&&+C\|f\|_\infty+C\sqrt{h}\,t^{3/4}\|f\|_{\infty}
\end{eqnarray}
Using the assumption that $\frac{h}{t^{3/2}}$ is small enough such that $\frac{Ch}{t^{3/2}}\le \frac{1}{2}$, we have
\begin{eqnarray}
\label{eqn:uh_l2_comp}
  \left(\sum_{i=1}^nu_i^2V_i\right)^{1/2}+t^{1/4}\left(\sum_{l\in I_S}u_l^2A_l\right)^{1/2}
\le  C\|f\|_\infty
\end{eqnarray}
Then the proof is complete by substituting above estimate \eqref{eqn:uh_l2_comp} in \eqref{eqn:elliptic_K} and \eqref{eqn:elliptic_dK}.
\end{proof}




\section{Discussion and Future Work}
\label{sec:discussion}

We have proved the convergence of the point integral method for the Poisson equation on manifolds with 
the Dirichlet boundary. 
In point integral method, the Dirichlet boundary can not be enforced directly. 
In this paper, we use Robin boundary to approximate the Dirichlet boundary and use point integral method to 
solve the Poisson equation with Robin boundary condition.

Another way to deal with the Dirichlet boundary condition in point integral method is using the volume 
constraint proposed by Du et.al. \cite{Du-SIAM}. The volume constraint has been integrated into the point integral method to enforce the Dirichlet boundary condition and the convergence has been proved \cite{shi-vc}.

\vspace{0.3in}
\noindent
{\bf Acknowledgments.}
This research was supported by NSFC Grant 11371220. 

\vspace{0.3in}

\appendix
\label{sec:appendix}

\section{Proof of Lemma \ref{lem:diff_dis}}
\begin{proof} 
First, denote
\begin{eqnarray}
u_{t,h}(\bx)=I_{\bff}(\bfu)
=\frac{1}{w_{t,h}(\bx)}\left(\sum_{j=1}^nR_t(\bx,\bfp_j)u_jV_j-\frac{2t}{\beta}\sum_{\bs_j\in S}\bar{R}_t(\bx,\bfs_j)u_jA_j
+t\sum_{j=1}^n\bar{R}_t(\bx,\bfp_j)f_jV_j\right),\nonumber
\end{eqnarray}
where $f_j = f(\bfp_j)$ and $w_{t,h}(\bx)=\sum_{j=1}^nR_t(\bx,\bfp_j)V_j$ and $\bfu=(u_j)$ solves \eqref{eqn:dis_dirichlet}
 with $b=0$.
Let
\begin{eqnarray*}
  v_1(\bx)&=&\frac{1}{w_{t,h}(\bx)}\sum_{j=1}^nR_t(\bx,\bfp_j)u_jV_j,~\text{and}\\
v_2(\bx)&=&-\frac{2t}{\beta w_{t,h}(\bx)}\sum_{\bs_j\in I_S}\bar{R}_t(\bx,\bfs_j)u_jA_j,~\text{and}\\
v_3(\bx)&=&\frac{t}{w_{t,h}(\bx)}\sum_{j=1}^n\bar{R}_t(\bx,\bfp_j)f_jV_j, 
\end{eqnarray*}
and then $u_{t,h}=v_1+v_2+v_3$ and  
\begin{eqnarray}
  \left|\|u_{t,h}\|_{L^2(\M)}^2-\sum_{j=1}^nu_j^2V_j\right|&=&\left|\sum_{m,m'=1}^{3}\left(\int_\M v_m(\bx)v_{m'}(\bx)\mathd \mu_\bx
-\sum_{j=1}^nv_m(\bx_j)v_{m'}(\bx_j)V_j\right)\right|\nonumber\\
&\le &\sum_{m,m'=1}^{3}\left|\int_\M v_m(\bx)v_{m'}(\bx)\mathd \mu_\bx-\sum_{j=1}^nv_m(\bx_j)v_{m'}(\bx_j)V_j\right|.\nonumber
\end{eqnarray}
We now estimate these six terms in the above summation one by one. First, we consider the term with $m=m'=1$.
Denote
\begin{eqnarray}
A = \int_{\mathcal{M}}\frac{C_t}{w_{t,h}^2(\bx)}R\left(\frac{|\bx-\bfp_i|^2}{4t}\right)
R\left(\frac{|\bx-\bfp_l|^2}{4t}\right)\mathd\mu_\bx-\nonumber\\
\sum_{j=1}^n\frac{C_t}{w_{t,h}^2(\bfp_j)}R\left(\frac{|\bfp_j-\bfp_i|^2}{4t}\right)R\left(\frac{|\bfp_j-\bfp_l|^2}{4t}\right)V_j,\nonumber
\end{eqnarray}
and then $|A|\le \frac{Ch}{t^{1/2}}$. At the same time, notice that only when $|\bfp_i-\bfp_l|^2 <16t$ is $A\neq 0$. Thus we have 
\begin{eqnarray}
|A| \le \frac{1}{\delta_0} |A| R(\frac{|\bfp_i-\bfp_l|^2}{32t}), \nonumber
\end{eqnarray}
and
\begin{eqnarray}
&&\left|\int_{\mathcal{M}}v_1^2(\bx)\mathd\mu_\bx-\sum_{j=1}^nv_1^2(\bfp_j)V_j\right| \nonumber\\
&\leq&  \sum_{i,l=1}^n|C_{t}u_iu_lV_iV_l| |A| \nonumber\\
&\le &\frac{Ch}{t^{1/2}} \sum_{i,l=1}^n\left|C_{t} R\left(\frac{|\bfp_i-\bfp_l|^2}{32t}\right) u_iu_lV_iV_l \right|\nonumber\\
&\le &\frac{Ch}{t^{1/2}} \sum_{i=1}^n \left(\sum_{l=1}^nC_{t} R\left(\frac{|\bfp_i-\bfp_l|^2}{32t}\right)V_l\right)^{1/2}\left(\sum_{l=1}^nC_{t} R\left(\frac{|\bfp_i-\bfp_l|^2}{32t}\right)
u_l^2V_l\right)^{1/2} u_iV_i \nonumber\\
&\le &\frac{Ch}{t^{1/2}} \left(\sum_{i=1}^n \sum_{l=1}^nC_{t} R\left(\frac{|\bfp_i-\bfp_l|^2}{32t}\right)
u_l^2V_lV_i\right)^{1/2} \left(\sum_{i=1}^nu_i^2V_i\right)^{1/2}.\nonumber\\
&= &\frac{Ch}{t^{1/2}} \left(\sum_{l=1}^nu_l^2V_l \sum_{i=1}^nC_{t} R\left(\frac{|\bfp_i-\bfp_l|^2}{32t}\right)
V_i\right)^{1/2} \left(\sum_{i=1}^nu_i^2V_i\right)^{1/2}\nonumber\\
\quad\quad\quad&\le & \frac{Ch}{t^{1/2}} \sum_{i=1}^nu_i^2V_i.\nonumber
\end{eqnarray}
Using a similar argument, we can obtain the following estimates for the remaining terms,
\begin{eqnarray*}
\label{eqn:u_12}
\left|\int_{\mathcal{M}}v_1(\bx)v_2(\bx)\mathd\mu_\bx-\sum_{j=1}^nv_1(\bfp_j)v_2(\bfp_j)V_j\right|&\le & \frac{Cht^{1/4}}{\beta} \left(\sum_{i=1}^nu_i^2V_i\right)^{1/2}
\left(\sum_{l\in I_S}u_l^2A_l\right)^{1/2},~\text{and}\nonumber\\
\label{eqn:u_13}
\left|\int_{\mathcal{M}}v_1(\bx)v_3(\bx)\mathd\mu_\bx-\sum_{j=1}^nv_1(\bfp_j)v_3(\bfp_j)V_j\right|&\le & Cht^{1/2} \left(\sum_{i=1}^nu_i^2V_i\right)^{1/2}
\left(\sum_{j=1}^nf_j^2V_j\right)^{1/2},~\text{and}\nonumber\\
\label{eqn:u_22}
\left|\int_{\mathcal{M}}v_2^2(\bx)\mathd\mu_\bx-\sum_{j=1}^nv_2^2(\bfp_j)V_j\right|&\le & \frac{Cht}{\beta^2}
\sum_{l\in I_S}u_l^2A_l,~\text{and}\nonumber\\
\label{eqn:u_23}
\left|\int_{\mathcal{M}}v_2(\bx)v_3(\bx)\mathd\mu_\bx-\sum_{j=1}^nv_2(\bfp_j)v_3(\bfp_j)V_j\right|&\le & \frac{Cht^{5/4}}{\beta} \left(\sum_{l\in I_S}u_l^2A_l\right)^{1/2}
\left(\sum_{j=1}^nf_j^2V_j\right)^{1/2},~\text{and}\nonumber\\
\label{eqn:u_33}
\left|\int_{\mathcal{M}}v_3^2(\bx)\mathd\mu_\bx-\sum_{j=1}^nv_3^2(\bfp_j)V_j\right|&\le & Cht^{3/2} \sum_{j=1}^nf_j^2V_j\nonumber.\\
\end{eqnarray*}
Assembling all the above estimates together, we obtain
\begin{eqnarray}
  \left|\|u_{t,h}\|^2_{L^2(\M)}-\sum_{i=1}^nu_i^2V_i\right|\le \frac{Ch}{t^{1/2}}\left(\sum_{i=1}^nu_i^2V_i+t^{1/2}\sum_{l\in I_S}u_l^2A_l+t^2\|f\|_{\infty}^2\right).\nonumber
\end{eqnarray}
Similarly, we have
\begin{eqnarray}
  \left|\|u_{t,h}\|^2_{L^2(\p\M)}-\sum_{l\in I_S}u_l^2A_l\right|\le \frac{Ch}{t}\left(\sum_{i=1}^nu_i^2V_i+t^{1/2}\sum_{l\in I_S}u_l^2A_l+t^2\|f\|_{\infty}^2
\right).\nonumber
\end{eqnarray}
Using the assumption that $\frac{h}{t^{1/2}}$ is small enough such that $\frac{Ch}{t^{1/2}}\le \frac{1}{2}$, we obtain 
\begin{eqnarray}
  \sum_{i=1}^nu_i^2V_i+t^{1/2}\sum_{l\in I_S}u_l^2A_l&\le& 2\left(\|u_{t,h}\|^2_{L^2(\M)}+t^{1/2}\|u_{t,h}\|^2_{L^2(\p\M)}\right)
+Ch\left(t^{3/2}\|f\|_{\infty}^2\right)\nonumber\\
&\le & C\|u_{t,h}\|^2_{H^1(\M)}+Cht^{3/2}\|f\|_{\infty}^2,\nonumber
\end{eqnarray}
which implies that
\begin{eqnarray}
  \left(\sum_{i=1}^nu_i^2V_i\right)^{1/2}+t^{1/4}\left(\sum_{l\in I_S}u_l^2A_l\right)^{1/2}
\le  C\|u_{t,h}\|_{H^1(\M)}+C\sqrt{h}\,t^{3/4}\|f\|_{\infty}.\nonumber
\end{eqnarray}
\end{proof}

\bibliographystyle{abbrv}
\bibliography{reference}

\end{document}